\theoremstyle{plain}
\theoremstyle{definition}\newtheorem{theorem}{Theorem}[section]
\theoremstyle{plain}\newtheorem{lemma}[theorem]{Lemma}
\theoremstyle{plain}\newtheorem{coro}[theorem]{Corollary}
\theoremstyle{plain}\newtheorem{proposition}[theorem]{Proposition}
\theoremstyle{remark}\newtheorem{remark}{Remark}[section]
\theoremstyle{definition}
\theoremstyle{definition}\newtheorem*{definition1}{Definition}
\newcommand{\Div}{\mathrm{div}}
\newcommand{\be}{\begin{equation}}
\newcommand{\ee}{\end{equation}}
 \newcommand{\ba}{\begin{aligned}}
 \newcommand{\ea}{\end{aligned}}
  \newcommand{\f}{\frac}
  \newcommand{\ben}{\begin{enumerate}}
   \newcommand{\een}{\end{enumerate}}
\newcommand{\te}{\text}
\newcommand{\Rmnum}[1]{\expandafter\@slowromancap\romannumeral #1@}
\numberwithin{equation}{section}
\begin{document}
\title{Anisotropic regularity conditions for the suitable weak solutions to the 3d Navier-Stokes equations}
\author{Yanqing Wang\footnote{School of Mathematical Sciences, Capital Normal University, Beijing 100048, PR China. Email: wangyanqing20056@gmail.com.}\;  and  Gang Wu\footnote{School of Mathematical Sciences, University of Chinese Academy of Sciences, Beijing 100049, PR China. Email: wugangmaths@gmail.com.}}
\date{}
\maketitle

\begin{abstract}
We are concerned with the problem,
originated from   Seregin \cite{[Seregin3],[Seregin1],[Seregin2]},
what are minimal sufficiently conditions for the regularity
 of suitable weak solutions to the 3d Naiver-Stokes equations.
We prove  some interior regularity criteria,
    in terms of either one  component of the velocity
    with sufficiently small local scaled norm and the rest  part
     with bounded local scaled norm,  or horizontal part of the vorticity  with sufficiently small local scaled norm
 and the vertical  part with bounded local scaled norm.
It is  also shown that only the smallness on the local scaled $L^{2}$ norm of horizontal gradient without any other condition
on the vertical gradient can still ensure the regularity of suitable weak solutions.
All these conclusions improve
  pervious results on the local scaled norm type
 regularity conditions.
 \end{abstract}

\noindent {\bf MSC(2000):} 35Q30, 35A02. \\
{\bf Keywords:} Navier-Stokes equations, suitable weak solutions, regularity.

\section{Introduction}

In this paper we consider the following classical incompressible 3d Navier-Stokes equations
\begin{align}
&u_{t}- \Delta u+u\cdot \nabla u+\nabla \Pi=0,~~\Div\,u=0,~(x,t)\in\Omega\times(0,\,T),\, u|_{\partial\Omega\times (0,T)}=0,\label{NS}
\end{align}
where $\Omega\subseteq\mathbb{R}^{3}$ is a bounded regular domain,
the vector field $u$ denotes  velocity  of the flow, the
scalar function $\Pi$ stands for pressure of the fluid.
The  initial data  $u(0)$ is also divergence-free.

In 1970s,  Scheffer \cite{[Scheffer1],[Scheffer2],[Scheffer4]} introduced the
concept of the suitable weak solutions, later developed by Caffarelli,
  Kohn,   Nirenberg \cite{[CKN]} and Lin \cite{[Lin]},
to the 3d Naiver-Stokes equations.
In contrast to the usual Leray-Hopf  weak solutions
equipped with the energy inequality,
the suitable weak solutions
  enjoy the  following local energy inequality  (inverse-Sobolev inequality)
\be\label{loc}
\begin{aligned}
&\int_{\Omega}|u(t',x)|^2\phi dx+2\int^{t'}_{t}\int_{\Omega}|\nabla u(s,x)|^2\phi dxds\\
\leq&\int^{t'}_{t}\int_{\Omega}|u(s,x)|^2
\Big(\partial_{t}\phi+\Delta\phi\Big)dx ds
+\int^{t'}_{t}\int_{\Omega}(|u(s,x)|^2 +2|\Pi(s,x)|)u(s,x)\cdot\nabla\phi dx ds,
         \end{aligned}
\ee
for any non-negative function $\phi\in C_{0}^{\infty}(\Omega\times(t,t'))$.\\
We now recall the definition of suitable weak solutions for the Navier-Stokes equations.
  \begin{definition1}[Suitable weak solutions] A pair $(u, \,\Pi)$ is said to be a suitable weak solution to the Navier-Stokes
equations $(1.1)$, provided the following conditions are satisfied
\begin{enumerate}[(i)]
 \item$u\in L^{\infty}(t,t';L^{2}(\Omega))\cap L^{2}(t,t';W^{1,2}(\Omega)),$~
 $\Pi\in L^{5/3}(t,t';L^{5/3}(\Omega)).$
\item$(u,\Pi)$ solves (1.1) in $\Omega\times (t,t') $ in the sense of distributions.
\item$(u,\Pi)$ satisfies the  local energy inequality \eqref{loc}. \end{enumerate}
 \end{definition1}

For the sake of statements, we denote by $\omega=\text{curl}\,u$ the vorticity of the flow,
and by $u_{h}=(u_{1},u_{2},0)$
 and $\omega_{h}=(\omega_{1},\omega_{2},0)$
the horizontal part of the velocity and
 the vorticity, respectively. Similarly, the horizontal gradient operator
 and vertical  gradient operator are denoted by
 $\nabla_{h}=(\partial_{1},\partial_{2},0)$
 and $\nabla_{3}=(0,0,\partial_{3})$, respectively.
 Throughout this paper, we also set
$$\ba
&B(x,r)=\{y\in \mathbb{R}^{3}||x-y|\leq r\},~~ &B&(r)= B(0,r),\\
&Q(x,t,r)=B(x,r)\times(t-r^{2},t),~~ &Q&(r)= Q(0,0,r).
\ea$$

A point is said to be a regular point to \eqref{NS} if $u$ is
 bounded in some neighborhood
of this point. The rest points will be called singular points.
 Making full use of the local energy inequality \eqref{loc},
 Scheffer \cite{[Scheffer1],[Scheffer2],[Scheffer4]} could estimate the size of
 the potential space-time
singular points set of suitable weak solutions.
The
optimal estimate of the Hausdorff dimension of the possible
 singular points set was obtained by Caffarelli,   Kohn,
  Nirenberg \cite{[CKN]}
 via establishing  the following regularity criterion:
 there is an absolute constant $\varepsilon_{1}$ such that, if
   \be\label{ckn}
  \limsup_{r\rightarrow0}\f{1}{r}\iint_{Q(r)}|\nabla u|^{2}dxdt
  \leq \varepsilon_{1},
  \ee
  then $(0,0)$ is a regular point.

Since then, there has been much effort to study the partial
    regularity of suitable weak solutions, see, for example,
\cite{[CKL],[DG],[GKT],[LS],[Lin],[NP],
[SS],[Seregin3],[Seregin1],[Seregin2],[TX],[Vasseur],[WZ],[WW]}.
 On   one hand, various kinds of alternative approach to
  the partial regularity theory of 3d Navier-stokes equations
  are developed. By means of  blow-up procedure and compact method,
 Lin \cite{[Lin]} gave a  simple proof  of   Caffarelli-Kohn-Nirenberg's theorem. See also Ladyzenskaja and Seregin \cite{[LS]} for more details.
 Applying the De Giorgi iteration argument
to the  Navier-Stokes equations,  Vasseur \cite{[Vasseur]} rebuilt the
  main results in \cite{[CKN]}.
 It should be pointed out that  the
 final task of all these proof is to
 prove that  \eqref{ckn} holds. Very recently, an analogue of    Caffarelli-Kohn-Nirenberg's theorem in \cite{[CKN]}
  for the suitable weak solutions to the  4d Navier-Stokes equations is   proved
 in \cite{[WW]}, where
 an analogous regularity condition \eqref{ckn} is verified, see also Dong and Gu \cite{[DG]}.
However, it is not known whether the Caffarelli-Kohn-Nirenberg type regularity criterion \eqref{ckn} is valid
for the higher dimensional Naiver-Stokes equations.
 On the other hand,   sufficient  regularity conditions  on   local scaled norm  similar to \eqref{ckn} are generalized and improved,
see, for example, \cite{[GKT],[SS],[Seregin3],[Seregin1],[Seregin2],[TX],[WZ]}.
Particularly, Tian and Xin    \cite{[TX]} established the following criteria:
there exists an absolute constant $\varepsilon_{2}$ such that, if the suitable weak solution $u$ satisfies
   \be\label{tx}
  \limsup_{r\rightarrow0}\f{1}{r}\iint_{Q(r)}|\omega|^{2}dxdt\leq \varepsilon_{2} ~~
  \text{or }~~ \limsup_{r\rightarrow0}\f{1}{r^{2}}\iint_{Q(r)}| u|^{3}dxdt\leq \varepsilon_{2}
  ,\ee
  then $(0,0)$ is a regular point. Later, the regularity conditions \eqref{ckn} and \eqref{tx}
 are further generalized and strengthened by
  Gustafson,   Kang and  Tsai in    \cite{[GKT]} to
\begin{equation}\label{tsai1}
\limsup_{r\to 0 }\,\, r^{1-  \frac 3p -\frac 2q}
\Big(\int^{0}_{-r^{2}}\Big(\int_{B(r)}|u|^{p}dx\Big)^{\f{q}{p}}ds\Big)^{\f{1}{q}} \leq \varepsilon_{3},\quad
1\leq \f{3}{p} +\f{2}{q} \leq 2,\;  1\leq p, q \leq \infty;
\end{equation}
or
\begin{equation}
\limsup_{r\to 0 }\,\, r^{2-  \frac 3p -\frac 2q}
\Big(\int^{0}_{-r^{2}}\Big(\int_{B(r)}|\nabla u|^{p }dx\Big)^{\f{q}{p }}ds\Big)^{\f{1}{q}} \leq \varepsilon_{3},\quad
2\le \f{3}{p} +\f{2}{q} \le 3, \; 1 \le p,q \le \infty;
\end{equation}
or
\begin{equation}\label{gkt2}
\begin{split}\limsup_{r\to 0 }\,\, r^{2-  \frac 3p -\frac 2q}
\Big(\int^{0}_{-r^{2}}\Big(\int_{B(r)}|\omega|^{p }dx\Big)^{\f{q}{p }}ds\Big)^{\f{1}{q}}
 \leq \varepsilon_{3}, \\
 2\le \f{3}{p} +\f{2}{q} \le 3, \; 1 \le p,q \le \infty,\;(p, q)\neq(1, \infty);
 \end{split}
 \end{equation}
 where $\varepsilon_{3}$ is an absolute constant. For other versions
   of the local scaled norm type
 regularity conditions, we refer the reader to \cite{[SS],[Seregin3]} and references therein.

  In \cite{[Seregin3],[Seregin1],[Seregin2]},
 Seregin began to address the problem what are minimal conditions
 which guarantee the regularity of suitable weak solutions.
 To be more precisely,
some  results  in  \cite{[Seregin2]} read as follows.  For any $M>0$, there exists a positive number $\varepsilon_4(M)$ such that, if
 $$
 \limsup_{r\rightarrow0}\f{1}{r}\iint_{Q(r)}|\nabla u|^{2}dxdt\leq M
 ~\text{ and}~~
  \liminf_{r\rightarrow0}\f{1}{r}\iint_{Q(r)}|\nabla_{3} u|^{2}dxdt\leq \varepsilon_{4}(M),
  $$
 or
 $$\
 \limsup_{r\rightarrow0}\f{1}{r}\iint_{Q(r)}|  u|^{3}dxdt\leq M
 ~\text{ and}~~
  \liminf_{r\rightarrow0}\f{1}{r}\iint_{Q(r)}| u|^{3}dxdt\leq \varepsilon_{4}(M),
  $$
then $(0, 0)$ is a regular point. This improved  the famous Caffarelli-Kohn-Nirenberg¡¯s condition.  The proof is mainly  based on used a blow-up 	
procedure.
Utilizing this method,
 Wang and Zhang \cite{[WZ]} recently showed that the
 smallness of horizontal part of the velocity
 is enough to ensure the regularity of suitable weak solutions, namely,
 for any constant $M>0$, there is a positive number $\varepsilon_5(M)$ such that
 $u$ is regular at $(0,0)$ if one of the following conditions holds,
\begin{equation}\label{wz1}\ba
&\ &\limsup_{r\to 0}\,\, r^{1-  \frac 3p -\frac 2q}
\Big(\int^{0}_{-r^{2}}\Big(\int_{B(r)}|u|^{p}dx\Big)
^{\f{q}{p}}ds\Big)^{\f{1}{q}} \leq M
 \\
&\text{and} &
\liminf_{r\to 0}\,\, r^{1-\frac 3{p } - \frac 2q  }
\Big(\int^{0}_{-r^{2}}\Big(\int_{B(r)}|u_{h}|^{p}dx
\Big)^{\f{q}{p}}ds\Big)^{\f{1}{q}}\leq \varepsilon_{5}(M),
\ea\end{equation} $\text{with}~
  1\leq \f{3}{p} +\f{2}{q} <2, \,  1< p,\, q \leq \infty;$
\begin{equation}\label{wz2}\ba
 &\ &\limsup_{r\to 0}\,\, r^{2-  \frac 3p -\frac 2q}
\Big(\int^{0}_{-r^{2}}\Big(\int_{B(r)}|\nabla u|^{p}dx\Big)^{\f{q}{p}}ds\Big)^{\f{1}{q}}
\leq M  \\
&\text{and} &\liminf_{r\to 0}\,\, r^{-( \frac 3p + \frac 2q -2)}
\Big(\int^{0}_{-r^{2}}\Big(\int_{B(r)}|\nabla u_{h}|^{p}dx\Big)^{\f{q}{p}}ds\Big)^{\f{1}{q}}
\leq \varepsilon_{5}(M), \ea\end{equation}
  with $2\leq\f{3}{p} +\f{2}{q} < 3,
 \, 1 < p,\,q \le \infty.$

The main objective of this paper is to
  refine the above regularity criteria for the
   regularity of the suitable weak solutions.
 Notice that
 the results obtained by Seregin \cite{[Seregin2]}  and Wang and Zhang \cite{[WZ]} do not  include the regularity criterion in terms of
 the vorticity which is one of  most important physical quantities in  fluid flows (see, for example, \cite{[BKM],[MB]}).
Our first result
 is to establish a regular condition involving  the smallness on the horizontal part of the vorticity.
Secondly,  we show that the smallness  of
  horizontal part  of the velocity
  in \eqref{wz1} and \eqref{wz2} can be reduced to the
   smallness of only one component of the velocity.
 It is worth noting that all previous
 sufficient  conditions in  \cite{[GKT],[Seregin2],[TX],[WZ]}
  involve full
 components of the velocity,
  vorticity or gradient of the velocity rather than
  genuine partial  components.
Our third result is to show that
the smallness of  horizontal gradient without  any condition
on the vertical gradient can still imply the
regularity of suitable weak solutions.
\begin{theorem}\label{the2}
Suppose that the pair $(u, \Pi)$ be a suitable weak solution to \eqref{NS} in $Q(1)$. For any
constant $M>0$, there exists a positive constant $\varepsilon_{11}(M)$ such that if $\omega \in L^{p}L^{q}(Q(1)),$
 \be\label{the2c}
\limsup_{r\rightarrow0} r^{2-  \frac 3{p } - \frac {2}{q  }}
\Big(\int^{0}_{-r^{2}}\Big(\int_{B(r)}|\omega|^{p }dx\Big)^{\f{q}{p }}ds\Big)^{\f{1}{q }}
 \leq M,
 \ee
\text{and}\be
 \liminf_{r\rightarrow0} r^{2-  \frac 3{p } - \frac {2}{q  }}
\Big(\int^{0}_{-r^{2}}\Big(\int_{B(r)}|\omega_{h}|^{p }dx\Big)^{\f{q}{p }}ds\Big)^{\f{1}{q }}
 \leq\varepsilon_{11}(M),\ee
with $2\leq3/p+2/q<3$, $1<p,\,q\leq\infty $,
then $(0,\,0)$ is a regular point.
\end{theorem}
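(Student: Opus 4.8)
The plan is to argue by contradiction through a rescaling and compactness (``blow-up'') procedure in the spirit of Seregin \cite{[Seregin2]} and Wang--Zhang \cite{[WZ]}, the genuinely new ingredient being a rigidity property of limit solutions whose vorticity is purely vertical. Suppose $(0,0)$ is a singular point. Using the $\liminf$ hypothesis, fix $r_{k}\downarrow0$ along which $r_{k}^{2-3/p-2/q}\norm{\omega_{h}}_{L^{q}((-r_{k}^{2},0);L^{p}(B(r_{k})))}\to0$, and set $u^{(k)}(y,s)=r_{k}u(r_{k}y,r_{k}^{2}s)$, $\Pi^{(k)}(y,s)=r_{k}^{2}\Pi(r_{k}y,r_{k}^{2}s)$ (the pressure being redefined on each ball through its own Poisson equation so that $(u^{(k)},\Pi^{(k)})$ is again a suitable weak solution on $Q(1/r_{k})\supset Q(1)$), and $\omega^{(k)}=\mathrm{curl}\,u^{(k)}$. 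Because the quantity in \eqref{the2c} is scale invariant and $3/p+2/q-2\ge0$, the hypotheses translate into $\norm{\omega^{(k)}}_{L^{q}L^{p}(Q(\rho))}\le M+o(1)$ for \emph{every} fixed $\rho\in(0,1]$, with the $o(1)$ uniform in $\rho$, while $\norm{\omega_{h}^{(k)}}_{L^{q}L^{p}(Q(1))}\to0$.

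The first task is uniform estimates for $u^{(k)}$ on, say, $Q(1/2)$. Reconstructing $u^{(k)}$ from $\omega^{(k)}$ on concentric balls by the Biot--Savart law up to a harmonic field (whose derivatives are controlled by $\norm{u^{(k)}}_{L^{2}}$ on a slightly larger ball), using the Calder\'on--Zygmund bound $\norm{\Pi^{(k)}}_{L^{3/2}}\lesssim\norm{u^{(k)}}_{L^{3}}^{2}$ for the locally defined pressure, and feeding these into the local energy inequality \eqref{loc}, one sets up an iteration over nested cylinders that closes the Caffarelli--Kohn--Nirenberg scaled quantities $\esssup_{s}\f{1}{\rho}\int_{B(\rho)}|u^{(k)}|^{2}$, $\f{1}{\rho}\iint_{Q(\rho)}|\nabla u^{(k)}|^{2}$, $\f{1}{\rho^{2}}\iint_{Q(\rho)}(|u^{(k)}|^{3}+|\Pi^{(k)}|^{3/2})$ uniformly in $k$ for each fixed small $\rho$; the restrictions $2\le3/p+2/q<3$, $1<p,q\le\infty$ are exactly what make the interpolation between the (scale-invariant) vorticity bound and the energy bound place $\nabla u^{(k)}$, hence $u^{(k)}$, in spaces with the exponents needed for that iteration to absorb, and the fact that the vorticity bound holds after rescaling at every sub-scale uniformly is what prevents a loss in $k$. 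With these bounds, the equation for $\partial_{s}u^{(k)}$ together with the Aubin--Lions--Simon lemma yields a subsequence with $u^{(k)}\to v$ strongly in $L^{3}(Q(1/2))$, $\nabla u^{(k)}\to\nabla v$ strongly in $L^{2}(Q(1/2))$, and $\Pi^{(k)}\rightharpoonup\pi$, where $(v,\pi)$ is a suitable weak solution on $Q(1/2)$; passing to the limit in the two vorticity bounds gives $\mathrm{curl}\,v\in L^{q}L^{p}(Q(1/2))$ and $(\mathrm{curl}\,v)_{h}=0$, i.e. $\mathrm{curl}\,v=\omega^{v}_{3}e_{3}$.

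Now comes the rigidity step. For a divergence-free $v$ with $\mathrm{curl}\,v=\omega^{v}_{3}e_{3}$, the identity $-\Delta v=\mathrm{curl}\,\mathrm{curl}\,v=(\partial_{2}\omega^{v}_{3},-\partial_{1}\omega^{v}_{3},0)$ forces $\Delta v_{3}=0$, while $\partial_{3}\omega^{v}_{3}=\partial_{1}\partial_{3}v_{2}-\partial_{2}\partial_{3}v_{1}=\partial_{1}\partial_{2}v_{3}-\partial_{2}\partial_{1}v_{3}=0$ (using $\partial_{3}v_{1}=\partial_{1}v_{3}$ and $\partial_{3}v_{2}=\partial_{2}v_{3}$, which are precisely $\omega^{v}_{2}=\omega^{v}_{1}=0$). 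Hence $v_{3}(\cdot,s)$ is harmonic on $B(1/2)$ — so $C^{\infty}$ in $x$ on interior balls with all norms controlled by the energy of $v$ — and $\omega^{v}_{3}$ is independent of $x_{3}$; writing $v_{h}(x',x_{3},s)=v_{h}(x',0,s)+\int_{0}^{x_{3}}\nabla_{h}v_{3}\,dt$ reduces the regularity of $v$ near the origin to that of the planar flow $w(x',s):=v_{h}(x',0,s)$, which (taking the trace of \eqref{NS} on $\{x_{3}=0\}$ and using the spatial smoothness of $v_{3}$) solves a two-dimensional Navier--Stokes-type system whose forcing terms are smooth, and is therefore smooth. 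Thus $v$ is bounded and smooth near $(0,0)$, and in particular there is a \emph{fixed} $\rho_{0}\in(0,1/2)$ with $\f{1}{\rho_{0}}\iint_{Q(\rho_{0})}|\nabla v|^{2}\le\varepsilon_{1}/2$, $\varepsilon_{1}$ the constant of \eqref{ckn}. By the strong $L^{2}$ convergence of the gradients, $\f{1}{\rho_{0}}\iint_{Q(\rho_{0})}|\nabla u^{(k)}|^{2}\le\varepsilon_{1}$ for $k$ large; undoing the scaling, $\f{1}{\rho_{0}r_{k}}\iint_{Q(\rho_{0}r_{k})}|\nabla u|^{2}\le\varepsilon_{1}$ with $\rho_{0}r_{k}\to0$, so \eqref{ckn} makes $(0,0)$ regular — the desired contradiction. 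One reads off $\varepsilon_{11}(M)$ as the smallness threshold needed for the two preceding stages to run (it depends on $M$ through the uniform bounds of the iteration).

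The main obstacle is the rigidity step: turning the algebraic identities $\Delta v_{3}=0$ and $\partial_{3}\omega^{v}_{3}=0$ into genuine interior regularity of $v$ near the origin requires controlling the reduced planar system — notably its pressure, which for a local suitable weak solution is only known a priori in $L^{5/3}$ and must be upgraded using the structure — and justifying the trace on $\{x_{3}=0\}$ at the regularity available. A secondary difficulty is the uniform estimate stage: deriving uniform energy and pressure bounds for the blow-up sequence from a bound on the \emph{vorticity} alone, rather than on $\nabla u$, forces the Biot--Savart reconstruction, the local pressure estimate, and the scale iteration to be interlocked carefully, which is where the exponent range $2\le3/p+2/q<3$, $1<p,q\le\infty$, is genuinely used.
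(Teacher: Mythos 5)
Your overall architecture (blow-up by contradiction, uniform CKN-type estimates via a localized Biot--Savart reconstruction and local pressure estimates, Aubin--Lions compactness, identification of a limit solution with $\omega_{h}=0$, regularity of that limit, and transfer of smallness back to $u$) is the same as the paper's, which runs the argument through Lemma \ref{lemma2.4}, Lemma \ref{lem1}, Proposition \ref{prop2} and the Appendix. However, two steps of your proposal have genuine gaps. First, your concluding contradiction uses \emph{strong} $L^{2}(Q(1/2))$ convergence of $\nabla u^{(k)}$ to $\nabla v$ in order to push the smallness of $\frac{1}{\rho_{0}}\iint_{Q(\rho_{0})}|\nabla v|^{2}$ onto $u^{(k)}$ and invoke \eqref{ckn}. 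Aubin--Lions gives strong convergence of $u^{(k)}$ (hence of $u^{(k)}$ in $L^{3}$ by interpolation with the uniform $L^{10/3}$ bound), but only \emph{weak} convergence of the gradients in $L^{2}$, and weak convergence is compatible with $\liminf_{k}\frac{1}{\rho_{0}}\iint|\nabla u^{(k)}|^{2}$ being strictly larger than the limit quantity. The paper avoids this by contradicting the $\varepsilon$-regularity criterion of Theorem \ref{Lin}, phrased in terms of $E_{3}(u,r)+P_{3/2}(\Pi,r)$: boundedness of the limit $v$ makes $E_{3}(v,r)=O(r^{3})$, and the pressure decay Lemma \ref{presure} makes $P_{3/2}(q^{k},r)=O(\sqrt{r})$ uniformly in $k$, which only needs the strong $L^{3}$ convergence you already have. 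You should rework your final step along these lines (or otherwise justify strong gradient convergence, which you have not done).

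Second, the rigidity step --- interior boundedness of the limit solution of \eqref{spe2} --- is the crux of the theorem and is left essentially unproved in your proposal; you correctly flag it as ``the main obstacle'' but the route you sketch does not close. Harmonicity of $v_{3}(\cdot,s)$ gives spatial smoothness for a.e.\ $s$ but no quantitative bound near $(0,0)$ uniformly in $s$ and no temporal regularity; the reduction to a planar flow $w(x',s)=v_{h}(x',0,s)$ requires a trace of an $L^{\infty}L^{2}\cap L^{2}H^{1}$ field on a hyperplane, the resulting planar field is not divergence-free (its two-dimensional divergence equals $-\partial_{3}v_{3}$), and the ``two-dimensional Navier--Stokes-type system'' it satisfies has a pressure that is only the restriction of the $L^{5/3}$ pressure of the three-dimensional suitable weak solution, so ``is therefore smooth'' does not follow from standard 2d theory. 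The paper instead proves this step by a Serrin-type bootstrap on the \emph{velocity} equation rewritten as $v_{t}-\Delta v+\nabla\Pi=-\omega\times v=-(-v_{2}\omega_{3},v_{1}\omega_{3},0)$: one first obtains $\omega_{3}\in L^{\infty}L^{2}\cap L^{2}W^{1,2}$ by an energy estimate in which the stretching term $\omega_{3}\partial_{3}v_{3}$ is handled using $\partial_{3}\omega_{3}=0$, and then iterates the heat-kernel integrability gain of Lemma \ref{serr0} to reach $v\in L^{\infty}_{loc}$ (alternatively one can cite Chae--Kang--Lee \cite{[CKL]}). Unless you carry out a complete argument for this step, your proof is not complete.
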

This result is an extension of criterion \eqref{gkt2}. The proof relies on the blow-up method  used in \cite{[Seregin2],[WZ]}.
 The key point is how to gain the interior
regularity for the suitable weak solution to the following system
  \be\label{spe2}\left\{\ba
&u_{t}-\Delta u+u\cdot\nabla u+\nabla \Pi=0,\\
&\text{div}\, u=0,~~\omega_{h}=0.\\
\ea\right.\ee
As a matter of fact, if two components of a vorticity belongs
 to  $L_{x}^{p}L_{t}^{q}$ for $3/p+2/q\leq2$,
 then the suitable weak solution of the Navier-Stokes equations
  is regular, which is shown by Chae,
   Kang and  Lee in \cite{[CKL]}. To make our paper more self-contained and more readable, we shall outline    an alternative path to the proof of interior
regularity for the system \eqref{spe2} in Appendix.
We use elementary tools involving the  special structure of equations \eqref{spe2}
and the
  bootstrapping argument utilized by Serrin
in \cite{[Serrin]}.
We would like to note that
 we work on the velocity equations rather than the
  vorticity equations used  in \cite{[Serrin]}.
    This together with the   Biot-Savart
 law and the Calder\'on-Zygmund Theorem allows us
  to prove Theorem \ref{the2}.

\begin{theorem}\label{the1}
Let $(u,\, \Pi)$ be a suitable weak solutions to \eqref{NS} in $Q(1)$.
For any
constant $M>0$, there exists a positive constant $\varepsilon_{21}(M)$(or $\varepsilon_{22}(M)$)
such that $(0,0)$ is regular point if
\begin{enumerate}[(1)]
\item \label{item1} $u\in L^{p}L^{q}(Q(1))$, with
  $1\leq 3/p+2/q <2, 1<p,\,q\leq\infty,$ \begin{align}\label{the1c}
  \limsup_{r\to 0}\,\, r^{1- \frac 3{p } - \frac 2q  }
\Big(\int^{0}_{-r^{2}}\Big(\int_{B(r)}|u|^{p}dx\Big)^{\f{q}{p}}ds\Big)^{\f{1}{q}}
  \leq M, \end{align}
 \text{and}  \begin{align}
 \liminf_{r\to 0}\,\, r^{1- \frac 3{p } - \frac {2}{q}}
\Big(\int^{0}_{-r^{2}}\Big(\int_{B(r)}|u_{3}|^{p}dx\Big)^{\f{q}{p}}ds\Big)
^{\f{1}{q}}\leq\varepsilon_{21}(M);\nonumber
\end{align}
\item $\nabla u\in L^{p}L^{q}(Q(1))$, with $2\leq3/p+2/q < 3, \,1<p,\,q\leq\infty,$ \begin{align}\label{the1asum}
 \limsup_{r\to 0}\,\, r^{2-  \frac 3p -\frac 2q  }
\Big(\int^{0}_{-r^{2}}\Big(\int_{B(r)}|\nabla u|^{p}
dx\Big)^{\f{q}{p}}ds\Big)^{\f{1}{q}}
 \leq M, \end{align}
and
 \begin{align}
 \liminf_{r\to 0}\,\, r^{2-  \frac 3{p }
  - \frac {2}{q}}
\Big(\int^{0}_{-r^{2}}\Big(\int_{B(r)}
|\nabla u_{3}|^{p}dx\Big)^{\f{q}{p}}ds\Big)
^{\f{1}{q}} \leq \varepsilon_{22}(M).
\end{align}
  \end{enumerate}
\end{theorem}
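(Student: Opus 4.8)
The plan is to argue by contradiction via the blow-up (zooming) procedure of Seregin \cite{[Seregin2]} and Wang--Zhang \cite{[WZ]}. I describe it for item~(1) (the velocity case); item~(2) is identical after replacing $u$ by $\nabla u$ and the exponent $1-\f3p-\f2q$ by $2-\f3p-\f2q$ throughout, and replacing the limiting constraint $v_3\equiv 0$ by $\nabla v_3\equiv 0$. Suppose $(0,0)$ were a singular point. Fix a sequence $\lambda_k\downarrow 0$ along which the $\liminf$ in the hypothesis on $u_3$ is attained, and set $u^{\lambda_k}(y,s)=\lambda_k u(\lambda_k y,\lambda_k^2 s)$, $\Pi^{\lambda_k}(y,s)=\lambda_k^2\Pi(\lambda_k y,\lambda_k^2 s)$. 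A change of variables gives, for every fixed $R>0$,
\be\nonumber
R^{1-\f3p-\f2q}\Big(\int_{-R^2}^0\!\!\Big(\int_{B(R)}|u^{\lambda_k}|^p\,dy\Big)^{\f qp}ds\Big)^{\f1q}=(R\lambda_k)^{1-\f3p-\f2q}\Big(\int_{-(R\lambda_k)^2}^0\!\!\Big(\int_{B(R\lambda_k)}|u|^p\,dx\Big)^{\f qp}dt\Big)^{\f1q},
\ee
so by \eqref{the1c} the left side is $\le M+o_k(1)$, while the analogous quantity built from $u_3^{\lambda_k}$ on $Q(1)$ tends to the finite value of the $\liminf$, hence is $\le\varepsilon_{21}(M)$.

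First I would run the compactness machinery for suitable weak solutions. The uniform scaled bound above, combined with the a priori estimates behind the Caffarelli--Kohn--Nirenberg theory (Sobolev/Poincar\'e ordering of the scaled norms in the range $1\le\f3p+\f2q<2$, the local energy inequality \eqref{loc}, and the local pressure estimate via the decomposition of $\Pi$ and the Calder\'on--Zygmund theorem), produces bounds for $(u^{\lambda_k},\Pi^{\lambda_k})$ in $L^\infty(-R^2,0;L^2(B(R)))\cap L^2(-R^2,0;W^{1,2}(B(R)))$ and $L^{3/2}(Q(R))$ that are uniform in $k$ for each fixed $R$. Passing to a subsequence, $u^{\lambda_k}\to v$ strongly in $L^3_{\mathrm{loc}}(\R^3\times(-\infty,0))$ and weakly in the energy space, $\Pi^{\lambda_k}\rightharpoonup P$, and $(v,P)$ is a suitable weak solution of \eqref{NS} on $\R^3\times(-\infty,0)$ whose scaled norm on every $Q(R)$ is $\le M$. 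Since $p,q>1$, the $L^q L^p(Q(1))$ norm of $u_3^{\lambda_k}$ controls a genuine Lebesgue norm after interpolation with the uniform $L^3$ bound, so in the limit $v_3$ obeys the scaled bound $\le\varepsilon_{21}(M)$ on $Q(1)$.

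The core of the argument is the claim that, for $\varepsilon_{21}(M)$ chosen small enough, every such limit profile $v$ is regular at $(0,0)$. I would establish this by a second compactness argument: were it false, there would exist suitable weak solutions $v_j$ on, say, $Q(2)$ with scaled norm $\le C(M)$ on $Q(2)$, with the scaled norm of $v_{j,3}$ on $Q(1)$ tending to $0$, each singular at $(0,0)$. A limit $v_\infty$ (again strong in $L^3(Q(1))$) then satisfies $v_{\infty,3}\equiv 0$ on $Q(1)$, so on $Q(1)$ it solves a system of the type \eqref{spe2} with a vanishing component: incompressibility gives $\partial_1 v_{\infty,1}+\partial_2 v_{\infty,2}=0$, the third momentum equation gives $\partial_3 P_\infty=0$, and the first two momentum equations form an $x_3$-parametrized family of two-dimensional Navier--Stokes systems, which is smooth near the origin; in particular $\f1\rho\iint_{Q(\rho)}|\nabla v_\infty|^2\to 0$ as $\rho\to 0$. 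Fixing $\rho_0$ with this quantity $<\varepsilon_1/4$ (where $\varepsilon_1$ is as in \eqref{ckn}) and using strong $L^2(Q(\rho_0))$ convergence of $\nabla v_j$, we get $\f1{\rho_0}\iint_{Q(\rho_0)}|\nabla v_j|^2<\varepsilon_1$ for large $j$, so \eqref{ckn} forces $v_j$ to be regular at $(0,0)$, a contradiction. For item~(2) one has instead $\nabla v_{\infty,3}\equiv 0$, i.e.\ $v_{\infty,3}=v_{\infty,3}(t)$; the time-dependent shift $z=x_3-\int v_{\infty,3}(\tau)\,d\tau$ turns $(v_{\infty,1},v_{\infty,2})$ into a $z$-parametrized family of $2$d Navier--Stokes equations with a modified pressure, and the conclusion is the same.

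Finally, with $v$ regular at $(0,0)$, fix $\rho_0>0$ with $\f1{\rho_0}\iint_{Q(\rho_0)}|\nabla v|^2<\varepsilon_1/2$; the strong $L^2(Q(\rho_0))$ convergence $\nabla u^{\lambda_k}\to\nabla v$ then yields $\f1{\rho_0\lambda_k}\iint_{Q(\rho_0\lambda_k)}|\nabla u|^2<\varepsilon_1$ for all large $k$, and since $\rho_0\lambda_k\to 0$ this contradicts the Caffarelli--Kohn--Nirenberg criterion \eqref{ckn} at $(0,0)$. The main obstacle — appearing both in the limit-profile claim and in this last step — is the strong $L^2_{\mathrm{loc}}$ convergence of the rescaled gradients $\nabla u^{\lambda_k}$; this is the standard difficulty in blow-up arguments for suitable weak solutions and is handled via a careful use of the local energy inequality \eqref{loc} together with the local pressure equation, ensuring that no energy is lost in the limit. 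A secondary, routine point is verifying that the hypothesis $\limsup\le M$ on the full scaled norm actually delivers the uniform local energy and pressure bounds needed to initiate the compactness, which relies on the ordering of the scaled norms under Sobolev and Poincar\'e inequalities in the stated range of $(p,q)$.
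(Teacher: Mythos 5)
Your overall blow-up/compactness strategy is in the spirit of the paper's proof (which goes through Propositions \ref{prop1}--\ref{prop2}), but there is a genuine gap at the point where you close the contradiction. Both your ``second compactness'' step and your final transfer of regularity from the limit profile $v$ back to $u$ rest on the \emph{strong} convergence $\nabla u^{\lambda_k}\to\nabla v$ (resp.\ $\nabla v_j\to\nabla v_\infty$) in $L^2_{\mathrm{loc}}$, so that the Caffarelli--Kohn--Nirenberg criterion \eqref{ckn} can be invoked. The uniform bounds you extract from the local energy inequality only give \emph{weak} $L^2$ convergence of the gradients; strong convergence of gradients for a sequence of suitable weak solutions is precisely what is not available from the energy estimates, and you do not supply an argument for it beyond asserting that it ``is handled via a careful use of the local energy inequality.'' The paper avoids this difficulty entirely: from the assumed singularity of each $u^{k}$ it records the lower bound $E_{3}(u^{k},r)+P_{3/2}(\Pi^{k},r)\geq\varepsilon_{0}$ coming from Theorem \ref{Lin} (the $\varepsilon$-regularity criterion in terms of $L^{3}$ of the velocity and $L^{3/2}$ of the pressure), passes to the limit in \emph{that} quantity --- which requires only the Aubin--Lions strong $L^{3}$ compactness of the velocities together with the pressure decay estimate of Lemma \ref{presure} --- and then contradicts it with $Cr^{3}+C\sqrt{r}\geq\varepsilon_{0}$ using the boundedness of the limit profile (Neustupa--Penel for $v_3=0$). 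If you replace your use of \eqref{ckn} by this $E_3+P_{3/2}$ criterion, the need for strong gradient convergence disappears and your argument can be repaired; note also that the paper runs a single compactness argument on a putative sequence of counterexamples (so the limit has $v_3\equiv 0$ exactly), rather than your two-tier scheme.

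A secondary divergence: for item~(2) the paper does not perform a second blow-up with the constraint $\nabla v_3\equiv 0$ and a Galilean shift. Instead it uses a Sobolev--Poincar\'e interpolation in the style of Gustafson--Kang--Tsai, namely
\be\nonumber
E_{3}(u_{3},\mu)\leq C\Big(\tfrac{r''}{\mu}\Big)^{2}E^{\f1q}(u_{3},r'')\,E_{\ast}^{1-\f1q}(u_{3},r'')\,E_{\ast;p,q}(u_{3},r'')+C\Big(\tfrac{\mu}{r''}\Big)E_{3}(u_{3},r''),
\ee
to convert smallness of the scaled norm of $\nabla u_{3}$ into smallness of $E_{3}(u_{3},r)$ for small $r$, after which item~(1) applies with $p=q=3$. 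This is considerably more elementary than your shifted-2d-Navier--Stokes argument, which in any case inherits the same unproved strong-gradient-convergence step.
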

Notice that any  suitable weak solution to the following system
\be\label{spe1}\left\{\ba
&u_{t}-\Delta u+u\cdot\nabla u+\nabla \Pi=0,\\
&\text{div}\, u=0,~~u_{3}=0,\\
\ea\right.\ee
is bounded,
which was proved by Neustupa and  Penel in \cite{[NP]}.
The result of \cite{[NP]} says that
if any one component of the velocity, which
is a suitable weak solutions to the 3d Navier-Stokes equations, is bounded, then the velocity
has no singular point.
Then the proof will be done in the same spirit as Theorem \ref{the2}.

With  the first regular criterion  in Theorem \ref{the1} in hand, following the path of
 Section 4 in \cite{[WZ]},  one can further extend the  Ladyzhenskaya-Prodi-Serrin's criterion obtained there. We leave this to
 the interesting reader.



Now we turn to the third result in this paper.
Note that the following version of Sobolev and Ladyzhenskaya's inequalities
\be\label{ls}
\|f\|_{L^{p}(\mathbb{R}^{3})}\leq C \|f\|^{\f{6-p}{2p}}_{L^{2}(\mathbb{R}^{3})}
\|\partial_{1}f\|^{\f{p-2}{2p}}_{L^{2}(\mathbb{R}^{3})}
\|\partial_{2}f\|^{\f{p-2}{2p}}_{L^{2}(\mathbb{R}^{3})}
\|\partial_{3}f\|^{\f{p-2}{2p}}_{L^{2}(\mathbb{R}^{3})},
 \text{with} ~~2\leq p\leq 6,
\ee
are frequently used in the investigation of
models of incompressible
fluid mechanics. For instance,
see \cite{[CW],[CDGE],[MZ],[Zheng]} and references therein.
Here, we are able to prove the  local  version of these inequalities
 (see Lemma \ref{keylemma} for details), which entails
  the following theorem.
\begin{theorem}\label{the3}
There exists an absolute constant $\varepsilon_{31}$
with the following property. If $(u,\Pi)$ is a suitable weak solution and
 \be\label{the3c}
  \limsup_{r\rightarrow0}\f{1}{r}\iint_{Q(r)}|\nabla_{h} u|^{2}dxds
  \leq \varepsilon_{31},
  \ee
  then (0,0) is a regular point.
\end{theorem}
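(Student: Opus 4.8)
The strategy is to reduce Theorem~\ref{the3} to the Caffarelli--Kohn--Nirenberg criterion \eqref{ckn}: it suffices to prove that
$$\limsup_{r\to0}\ \f1r\iint_{Q(r)}|\nabla u|^{2}\,dxds\ \le\ \varepsilon_{1},$$
where $\varepsilon_{1}$ is the absolute constant of \eqref{ckn}. Introduce the usual dimensionless quantities
$$A(r)=\sup_{-r^{2}<t<0}\f1r\int_{B(r)}|u|^{2}\,dx,\quad E(r)=\f1r\iint_{Q(r)}|\nabla u|^{2},\quad E_{h}(r)=\f1r\iint_{Q(r)}|\nabla_{h}u|^{2},$$
$$C(r)=\f1{r^{2}}\iint_{Q(r)}|u|^{3},\qquad D(r)=\f1{r^{2}}\iint_{Q(r)}|\Pi|^{3/2}.$$
Because $(u,\Pi)$ is a suitable weak solution, $u\in L^{10/3}_{\rm loc}$ and $\Pi\in L^{5/3}_{\rm loc}$, so all of these are finite at every fixed scale, and by hypothesis $E_{h}(r)\le 2\varepsilon_{31}$ for all sufficiently small $r$. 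We shall follow the proof of \eqref{ckn} essentially verbatim; the single --- but decisive --- modification is in the way the cubic quantity $C$ is estimated.

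The point is to bound $C(r)$ so that the factor producing smallness is a power of $E_{h}$ rather than of $E$. We apply the \emph{local} anisotropic Sobolev inequality of Lemma~\ref{keylemma} with exponent $p=3$ to the individual velocity components on each time slice: for the third component we retain the vertical derivative $\partial_{3}u_{3}$ but immediately rewrite it, using incompressibility, as $-\partial_{1}u_{1}-\partial_{2}u_{2}$, so that \emph{every} derivative factor of $u_{3}$ is controlled by $|\nabla_{h}u|$ (apart from the intrinsic zeroth-order term $\rho^{-1}|u|$ of the local inequality); for the first two components we combine their two horizontal-derivative factors, each of which is again controlled by $|\nabla_{h}u|+\rho^{-1}|u|$, and bound the single remaining vertical-derivative factor simply by $|\nabla u|+\rho^{-1}|u|$. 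Cubing and integrating over time with Hölder's inequality as in the classical argument then produces an estimate of the schematic form
$$C(r)\ \le\ C\,E_{h}(r)^{1/2}\big(A(r)+E(r)\big)\ +\ C\,\big(A(r)+E(r)\big)^{3/2},$$
in which the first term carries the genuinely small factor $E_{h}(r)^{1/2}\le(2\varepsilon_{31})^{1/2}$, while the second is scaling critical and superlinear --- exactly of the type already present in the proof of \eqref{ckn}, and harmless once all the dimensionless quantities are small.

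Inserting this into the two remaining ingredients of the $\varepsilon$-regularity theory --- the local energy inequality \eqref{loc} tested against a truncated backward heat kernel, which yields, for a fixed dilation ratio $\theta\in(0,1)$,
$$A(\theta\rho)+E(\theta\rho)\ \le\ C\theta^{2}A(\rho)+C\theta^{-2}\big(C(\rho)^{2/3}+C(\rho)+C(\rho)^{1/3}D(\rho)^{2/3}\big),$$
and the Calder\'on--Zygmund/harmonic splitting of $\Pi$, which yields $D(\theta\rho)\le C\theta D(\rho)+C\theta^{-2}C(\rho)$ --- we run the iteration exactly as in the proof of \eqref{ckn}: first fix $\theta$ small enough that $C\theta^{2}<\tfrac1{10}$, then fix $\varepsilon_{31}=\varepsilon_{31}(\theta)$ small enough that $C\theta^{-2}\varepsilon_{31}^{1/2}<\tfrac1{10}$, and conclude that $A,E,C,D$ stay small and tend to $0$ along the scales $\theta^{k}\rho_{0}$; in particular $\f1r\iint_{Q(r)}|\nabla u|^{2}\to0$, so \eqref{ckn} gives that $(0,0)$ is regular. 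The main obstacle is that suitable weak solutions are too rough to permit a direct energy estimate for $\nabla_{h}u$, so the whole argument has to be extracted from \eqref{loc} and from the pressure equation $-\Delta\Pi=\partial_{i}\partial_{j}(u_{i}u_{j})$ alone; this is precisely why the \emph{local} version of \eqref{ls}, furnished by Lemma~\ref{keylemma}, rather than its classical global form, is the essential tool, and the bulk of the work consists in checking that the modified estimate for $C$ still lets the Caffarelli--Kohn--Nirenberg iteration close.
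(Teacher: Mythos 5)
Your overall strategy---localized anisotropic Ladyzhenskaya inequality plus a Caffarelli--Kohn--Nirenberg-type iteration, with the smallness carried by $\nabla_h u$---is the right one, and the observation that $\partial_3u_3=-\partial_1u_1-\partial_2u_2$ is a legitimate way to trade one vertical derivative for horizontal ones. But the iteration as you set it up does not close, and the failure is exactly at the step you dismiss as ``harmless''. Your bound
$$C(r)\ \le\ C\,E_{h}(r)^{1/2}\big(A(r)+E(r)\big)\ +\ C\,\big(A(r)+E(r)\big)^{3/2}$$
contains a term with \emph{no} small factor and \emph{no} positive power of the scale ratio. Feeding $C(\rho)^{2/3}\gtrsim (A+E)(\rho)$ into the energy inequality produces $A(\theta\rho)+E(\theta\rho)\le C\theta^{-2}(A+E)(\rho)+\cdots$, which gives no decay whatsoever. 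In the genuine CKN argument this term is tamed because $E(\rho)$ itself is assumed small at every scale; here the hypothesis \eqref{the3c} controls only $E_{\ast,h}$, and there is no mechanism --- and no initial scale --- at which $A+E$ is known to be small, or even uniformly bounded in $r$ (Lemma \ref{lem1} is not applicable, since none of its hypotheses on scaled norms of $u$ or $\nabla u$ is available). ``Once all the dimensionless quantities are small'' is therefore circular. A secondary point: Lemma \ref{keylemma} is stated only for the exponent $10/3$ and is already time-integrated; there is no slice-wise ``$p=3$'' version to invoke, and your replacement of $\partial_3u_i$ ($i=1,2$) by $|\nabla u|+\rho^{-1}|u|$ is precisely what destroys the structure of that lemma, in which every term on the right carries either a positive power of $E_{\ast,h}$ or a positive power of $\mu/\rho$ multiplying the quantity being iterated.

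The paper closes the loop differently, and the difference is essential. First, the target is not \eqref{ckn} but the velocity criterion \eqref{tsai1}/\eqref{tx}: one shows $E_{10/3}(u,r)\le\varepsilon_2$ for small $r$ (your detour back through $\f1r\iint|\nabla u|^2$ is in any case redundant once $E_3$ and $P_{3/2}$ are small, by Theorem \ref{Lin}). Second --- and this is the key idea your proposal is missing --- the iterated quantity is the weighted sum
$$\psi(r)=E_{10/3}(u,r)+\varepsilon_{31}^{1/9}\big(E(u,r)+E_{\ast}(u,r)+P_{5/3}(\Pi,r)\big).$$
With this weighting, the superlinear product $E^{2/3}E_{\ast,3}^{1/3}E_{\ast,h}^{2/3}$ from Lemma \ref{keylemma} becomes, after Young's inequality and $E_{\ast,h}\le\varepsilon_{31}$, a term $\le C\varepsilon_{31}^{5/9}\,\psi(\rho)$ --- linear in $\psi$ with a small coefficient --- while the sublinear terms $\varepsilon_{31}^{1/9}E_{10/3}^{3/5}$, $\varepsilon_{31}^{1/9}E_{10/3}^{9/10}$ coming from the local energy inequality become $CE_{10/3}+C\varepsilon_{31}^{\alpha}$ with $\alpha>0$. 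The result is a genuinely linear recursion $\psi(\lambda\rho)\le q\,\psi(\rho)+C\varepsilon_{31}^{1/9}$ with $q<1$, which needs only finiteness of $\psi$ at one starting scale (guaranteed by the definition of a suitable weak solution) and requires no a priori smallness of $A$ or $E$. If you want to salvage your version, you must either import this weighting device or prove a local estimate for $E_3$ in which \emph{every} term carries a factor $E_{\ast,h}^{\beta}$ with $\beta>0$ or a factor $(\mu/\rho)^{\gamma}$ multiplying $E_3(\rho)$; as written, the $(A+E)^{3/2}$ term blocks both options.
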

This is an
improvement of condition \eqref{ckn}
 given by   Caffarelli,  Kohn and
  Nirenberg in \cite{[CKN]}.
As a by-product of Theorem \ref{the3},   H\"older's  inequality and absolute continuity of Lebesgue's integral immediately yield that
\begin{coro}
 Suppose that $(u,\Pi)$ is a suitable weak solution to \eqref{NS}. If there exist a constant $r$ such that
 $$\nabla_{h}u\in L^{p}_{x}L^{q}_{t}(Q(r)),
 ~\text{with}~\f{3}{p}+\f{2}{q}\leq2, \,\,2\leq p,\,q\leq\infty,$$
 then $(0,0)$ is regular point.
\end{coro}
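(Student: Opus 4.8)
The plan is to derive the corollary as an immediate consequence of Theorem \ref{the3}: I will show that the hypothesis $\nabla_{h}u\in L^{p}_{x}L^{q}_{t}(Q(r))$ with $\f3p+\f2q\le2$ and $2\le p,q\le\infty$ forces
\be\label{proposal-goal}
\limsup_{\rho\rightarrow0}\f1\rho\iint_{Q(\rho)}|\nabla_{h}u|^{2}\,dx\,ds=0,
\ee
which is in particular $\le\varepsilon_{31}$, so that \eqref{the3c} holds at the origin and $(0,0)$ is a regular point by Theorem \ref{the3}. Thus the whole matter reduces to an elementary scaling estimate combined with the absolute continuity of the Lebesgue integral.

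For the scaling estimate, fix $0<\rho<r$ and apply H\"older's inequality on $Q(\rho)=B(\rho)\times(-\rho^{2},0)$, first in the space variable with exponents $\tfrac p2,(\tfrac p2)'$ (using $|B(\rho)|\simeq\rho^{3}$) and then in the time variable with exponents $\tfrac q2,(\tfrac q2)'$ (using $|(-\rho^{2},0)|=\rho^{2}$), the usual conventions covering $p=\infty$ or $q=\infty$. This gives
\be
\f1\rho\iint_{Q(\rho)}|\nabla_{h}u|^{2}\,dx\,ds\le C\,\rho^{3(1-\frac2p)+2(1-\frac2q)-1}\Big(\int^{0}_{-\rho^{2}}\Big(\int_{B(\rho)}|\nabla_{h}u|^{p}\,dx\Big)^{\frac qp}ds\Big)^{\frac2q}=C\,\rho^{2(2-\frac3p-\frac2q)}\,\|\nabla_{h}u\|^{2}_{L^{p}_{x}L^{q}_{t}(Q(\rho))},
\ee
and the exponent $2(2-\tfrac3p-\tfrac2q)$ is nonnegative by hypothesis.

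It then remains to pass to the limit $\rho\rightarrow0$, and here I will split two cases. If $\tfrac3p+\tfrac2q<2$, the power of $\rho$ is strictly positive while $\|\nabla_{h}u\|_{L^{p}_{x}L^{q}_{t}(Q(\rho))}\le\|\nabla_{h}u\|_{L^{p}_{x}L^{q}_{t}(Q(r))}<\infty$ stays bounded, so the right-hand side tends to $0$. If $\tfrac3p+\tfrac2q=2$, then necessarily $p,q<\infty$ (equality would otherwise force $q=1$ or $p=\tfrac32$, both excluded by $p,q\ge2$); since $\chi_{Q(\rho)}\to0$ pointwise as $\rho\rightarrow0$ and is dominated by $\chi_{Q(r)}$, the dominated convergence theorem applied inside the iterated integral (first in $x$, then in $t$, both with finite exponents) yields $\|\nabla_{h}u\|_{L^{p}_{x}L^{q}_{t}(Q(\rho))}\rightarrow0$, and again the right-hand side tends to $0$. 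In both cases \eqref{proposal-goal} follows and Theorem \ref{the3} concludes the argument. I do not expect any real obstacle here; the only point needing a little care is the borderline case $\tfrac3p+\tfrac2q=2$, where the power of $\rho$ degenerates to $\rho^{0}$ and one genuinely needs the vanishing of the truncated mixed norm rather than a gain in $\rho$, which is exactly where the ``absolute continuity of Lebesgue's integral'' enters.
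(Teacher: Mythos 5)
Your proof is correct and follows exactly the route the paper indicates: H\"older's inequality on $Q(\rho)$ gives $\f1\rho\iint_{Q(\rho)}|\nabla_{h}u|^{2}\leq C\rho^{2(2-\frac3p-\frac2q)}\|\nabla_{h}u\|^{2}_{L^{p}_{x}L^{q}_{t}(Q(\rho))}$, and in the critical case $\frac3p+\frac2q=2$ (where necessarily $p,q<\infty$) the absolute continuity of the Lebesgue integral supplies the smallness needed to invoke Theorem \ref{the3}. Nothing is missing; your treatment of the borderline case is exactly the point the paper alludes to.
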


This  paper is organized as follows. In the  second section,
we introduce some notations, recall some known conclusions and prove some auxiliary lemmas which will play a key role in the proof of our results.
 Section 3 contains the proofs of Theorem 1.1 and 1.2.
  Section 4 is devoted to prove  Theorem 1.3.
Finally,  an appendix
 is dedicated to  the proof of the regularity of the solution to system \eqref{spe2}.

\section{Notations and some auxiliary lemmas} \label{section2}

 For $p\in [1,\,\infty]$, the notation $L^{p}((0,\,T);X)$ stands for the set of measurable functions on the interval $(0,\,T)$ with values in $X$ and $\|f(t,\cdot)\|_{X}$ belongs to $L^{p}(0,\,T)$.  For simplicity,   we write $\|f\| _{L^{p}L^{q}(Q(r))}:=\|f\| _{L^{q}(-r^{2},0;L^{p}(B(r)))} $ and
 $\|f\| _{L^{p}(Q(r))}:=\|f\| _{L^{p}L^{p}(Q(r))}$.
 We will also use  the summation convention on repeated indices.
 $C$ is an absolute
   constant which may be different from line to line unless otherwise stated.

Due to the
scaling property of Navier-Stokes equations \cite{[CKN]},
we introduce the following dimensionless quantities
\begin{align}
&E_{p}(u,\,r)=\frac{1}{r^{5-p}}\iint_{Q(r)}|u|^{p}dx dt,
&&E_{\ast}(u,\,r)=\frac{1}{r }\iint_{Q(r)}|\nabla u|^2dxdt,  \nonumber\\
&E(u,\,r)=\sup_{-r^2\leq   t<0}\frac{1}{r }\int_{B(r)}|u|^2dx,
&&P_{q}(\Pi,r)=\frac{1}{r^{5-2q}}\iint_{Q(r)}|\Pi|^{q}dx
dt, \nonumber\\
&E_{\ast,h}(u,\,r)=\frac{1}{r }\iint_{Q(r)}|\nabla_{h} u|^2dxdt,
&&E_{\ast,3}(u,\,r)=\frac{1}{r }\iint_{Q(r)}|\nabla_{3} u|^2dxdt
, \nonumber\\
&W(\omega,\,r)=\frac{1}{r }\iint_{Q(r)}|\omega|^2dxdt,
&&W_{h}(\omega,\,r)=\frac{1}{r }\iint_{Q(r)}|\omega_{h}|^2dxdt
, \nonumber \end{align}
\begin{align}
&E_{p,q}(u,\,r)=r^{1-\f{3}{p}-\f{2}{q}}\Big(\int^{0}_{-r^{2}}\Big(\int_{B(r)}|u|^{p}dx\Big)
^{\f{q}{p}}dt\Big)^{\f{1}{q}},
\nonumber\\
&E_{\ast;p,q}(u,\,r)=r^{2-\f{3}{p}-\f{2}{q}}\Big(\int^{0}_{-r^{2}}\Big(\int_{B(r)}|\nabla u|^{p }dx\Big)^{\f{q}{p }}dt\Big)^{\f{1}{q}},\nonumber\\
&W_{p,q}(\omega,\,r)=r^{2-\f{3}{p}-\f{2}{q}}\Big(\int^{0}_{-r^{2}}\Big(\int_{B(r)}|\omega|^{p }dx\Big)^{\f{q}{p }}dt\Big)^{\f{1}{q}}.\nonumber
\end{align}

First, we recall some known results.
\begin{lemma}\label{lem1}\cite{[Seregin1],[WZ]}
Let $(u,\Pi)$ be a suitable weak solution to (\ref{NS}) in $Q(1)$. Assume
that, for any constant $M>0$ and $r\in(0,1]$, there hold
\be \ba
&&r^{1-\f{3}{p}-\f{2}{q}}
\Big(\int^{0}_{-r^{2}}\Big(\int_{B(r)}|u|^{p}
dx\Big)^{\f{q}{p}}ds\Big)^{\f{1}{q}}\leq
 M\quad\text{ with }\quad1\le\frac3p+\frac2q<2,
  1<q\leq \infty
 \ea\ee
 or
 \be \ba
&&r^{2-\f{3}{p}-\f{2}{q}}\Big(\int^{0}_{-r^{2}}
\Big(\int_{B(r)}|\nabla u|^{p }dx\Big)^{\f{q}{p }}ds
\Big)^{\f{1}{q}}\leq M\quad\text{ with }
\quad 2\le \frac3{p}+\frac2{q}<3, 1<p\le\infty.
\ea\ee
Then,   for any $r\in (0,\f{1}{2}]$,
\be
E(u,r)+E_{\ast}(u,r)+P_{3/2}(\Pi,r)\leq C(p,q,M)\big(r^{1/2}\big(E_{3}(u,1)+P_{3/2}(\Pi,1)\big)+1\big).
\ee
\end{lemma}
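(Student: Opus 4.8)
The plan is to run the dyadic iteration of Caffarelli--Kohn--Nirenberg / Ladyzhenskaya--Seregin type for the dimensionless quantities on the left, using the hypothesis to absorb the otherwise supercritical nonlinear and pressure contributions. Set $\Phi(r):=E(u,r)+E_{\ast}(u,r)+P_{3/2}(\Pi,r)$. It suffices to find $\theta\in(0,1/2)$ so that the recursion $\Phi(\theta r)\le\theta^{1/2}\,\Phi(r)+C(p,q,M)$ holds for all $0<r\le1/2$: iterating it along $r_k=\theta^k/2$ gives $\Phi(r_k)\le\theta^{k/2}\Phi(1/2)+C(p,q,M)(1-\theta^{1/2})^{-1}$, and since $E$, $E_{\ast}$, $P_{3/2}$ change by at most a factor $C(\theta)$ between $r_k$ and any $r\in[r_{k+1},r_k]$ while $\theta^{k/2}\sim r^{1/2}$ (up to a $\theta$-dependent constant), one gets $\Phi(r)\le C(\theta)r^{1/2}\Phi(1/2)+C(p,q,M)$; inserting the unconditional ``zeroth step'' bound $\Phi(1/2)\le C\big(E_{3}(u,1)+P_{3/2}(\Pi,1)+1\big)$ (established below) and absorbing $r^{1/2}\le1$ then yields the statement.

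I would build the recursion from three estimates. \textbf{(a) Pressure.} On $B(r)$ write $\Pi=\Pi^{1}_{r}+\Pi^{2}_{r}$, with $-\Delta\Pi^{1}_{r}=\partial_i\partial_j(u_iu_j)$ cut off in $B(r)$ and $\Pi^{2}_{r}$ harmonic in $B(r)$. Calder\'on--Zygmund gives $\|\Pi^{1}_{r}(\cdot,t)\|_{L^{3/2}(B(r))}\lesssim\|u(\cdot,t)\|_{L^{3}(B(r))}^{2}$, the mean value property gives $\|\Pi^{2}_{r}\|_{L^{3/2}(B(\theta r))}\lesssim\theta^{3}\|\Pi^{2}_{r}\|_{L^{3/2}(B(r))}$ together with an interior gradient bound, and tracking the scaling weights this yields $P_{3/2}(\Pi,\theta r)\le C\theta^{-2}E_{3}(u,r)+C\theta\,P_{3/2}(\Pi,r)$. \textbf{(b) Local energy inequality.} Testing \eqref{loc} against a cut-off adapted to $Q(\theta r)\subset Q(r)$ (with $|\nabla\phi|\lesssim(\theta r)^{-1}$, $|\partial_t\phi|+|\Delta\phi|\lesssim(\theta r)^{-2}$), subtracting the spatial mean of $\Pi$ (legitimate since $\Div u=0$ makes $\int u\cdot\nabla\phi=0$) and treating $\Pi^{2}_{r}$ by its interior gradient estimate and $\Pi^{1}_{r}$ by Calder\'on--Zygmund, one obtains $E(u,\theta r)+E_{\ast}(u,\theta r)\le C\theta^{-3}\big(E_{3}(u,r)^{2/3}+E_{3}(u,r)\big)+C\theta\big(E_{3}(u,r)+P_{3/2}(\Pi,r)\big)$; taking here $r=1$, $\theta=1/2$ and using Young's inequality produces the zeroth step bound above. \textbf{(c) Taming $E_{3}$ --- where the hypothesis enters.} Interpolating $\|u(\cdot,t)\|_{L^{3}(B(r))}$ between $L^{p}(B(r))$ (the spatial exponent of the hypothesis) and $L^{2},L^{6}$ (Gagliardo--Nirenberg, the $L^{6}$-norm controlled by the Sobolev inequality on $B(r)$), then applying H\"older in $t$ between $L^{q}$ (the time exponent of the hypothesis) and $L^{\infty},L^{2}$ (from $u\in L^{\infty}L^{2}\cap L^{2}W^{1,2}$), and finally inserting $E_{p,q}(u,r)\le M$, I expect to reach $E_{3}(u,r)\le C(p,q,M)\big(E(u,r)+E_{\ast}(u,r)\big)^{\gamma}$ with $\gamma=\gamma(p,q)<1$, the strict inequality $3/p+2/q<2$ being exactly what forces $\gamma<1$ ($\gamma\uparrow1$ only in the limit $3/p+2/q\uparrow2$). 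For the gradient form of the hypothesis one first converts it, by the Sobolev inequality on $B(r)$, to a velocity bound of the above type, the exponent condition $2\le3/p+2/q<3$ mapping onto $1\le3/p+2/q<2$.

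Feeding (c) into (a)--(b) and applying Young's inequality to replace each nonlinear occurrence by $\varepsilon\,\Phi(r)+C(\varepsilon,p,q,M)$, one arrives at $\Phi(\theta r)\le\big(C_{0}\theta+C_{0}\theta^{-3}\varepsilon\big)\Phi(r)+C_{0}\theta^{-3}C(\varepsilon,p,q,M)$. Choosing first $\theta$ so small that $C_{0}\theta\le\tfrac12\theta^{1/2}$ and then $\varepsilon$ so small that $C_{0}\theta^{-3}\varepsilon\le\tfrac12\theta^{1/2}$ --- the order mattering because $C(\varepsilon,\cdots)\to\infty$ as $\varepsilon\to0$ --- delivers the contraction $\Phi(\theta r)\le\theta^{1/2}\Phi(r)+C(p,q,M)$, and hence the lemma. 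I expect step (c) to be the main obstacle: one must carry out the space--time interpolation uniformly over the whole admissible range (in particular when $p<3$ or $q$ is small, where the energy is genuinely needed and the $L^{\infty}_{t}L^{2}_{x}$, $L^{2}_{t}L^{6}_{x}$ and $L^{q}_{t}L^{p}_{x}$ informations must be combined in exactly the right proportions) and verify that all the scaling exponents collapse so that the resulting power $\gamma$ of $E(u,r)+E_{\ast}(u,r)$ comes out strictly below $1$.
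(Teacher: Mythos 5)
The paper does not actually prove Lemma \ref{lem1}; it is imported from \cite{[Seregin1],[WZ]}, so I am measuring your plan against the arguments given there. For the \emph{velocity} hypothesis your scheme is essentially the standard one and is sound: a Campanato-type recursion for $\Phi(r)=E+E_{\ast}+P_{3/2}$ assembled from the local energy inequality, the harmonic/Calder\'on--Zygmund pressure splitting (the paper's Lemma \ref{presure} with $q=3/2$), and a space--time interpolation of the form $E_{3}(u,r)\le C(p,q)\,M^{a}\big(E(u,r)+E_{\ast}(u,r)\big)^{b}$ with $a+2b=3$; one checks that an admissible choice with $a\ge 1$ (hence $b<1$, permitting Young absorption) exists precisely when $3/p+2/q\le 2$, so the strict inequality is indeed what forces $b<1$, as you anticipate. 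The exact powers of $\theta$ in your step (b) are off --- the pressure term comes out of \eqref{loc} as $\theta^{-\kappa}E_{3}^{1/3}P_{3/2}^{2/3}$ rather than with a favorable factor $\theta$ --- but this is harmless once your step (a) is in the loop.

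The \emph{gradient} case, however, is not the one-line reduction you describe, and this is a genuine gap. Sobolev (or Poincar\'e--Sobolev) on $B(r)$ controls $u-\overline{u}_{B(r)}$ in $L^{p^{\ast}}$, not $u$ itself; the discarded piece is bounded only by $r^{-1}\|u\|_{L^{p}(B(r))}$, i.e.\ by $C\,E(u,r)^{1/2}$ in the dimensionless bookkeeping. Hence the ``converted'' hypothesis reads $E_{p^{\ast},q}(u,r)\le C\big(M+E(u,r)^{1/2}\big)$, with the unknown $\Phi(r)$ on the right-hand side, and feeding this into the interpolation produces a term $\Phi(r)^{a/2+b}=\Phi(r)^{3/2}$ (since $a+2b=3$), which cannot be absorbed by Young's inequality into $\varepsilon\,\Phi(r)$: the reduction is circular. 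The way this is handled in \cite{[GKT],[WZ]} is to adjoin $E_{3}(u,\cdot)$ to the iterated functional and prove a separate decay estimate
\[
E_{3}(u,\mu)\le C\Big(\frac{\rho}{\mu}\Big)^{2}E^{1/q}(u,\rho)\,E_{\ast}^{1-1/q}(u,\rho)\,E_{\ast;p,q}(u,\rho)+C\Big(\frac{\mu}{\rho}\Big)E_{3}(u,\rho),
\]
in which the mean is controlled by Jensen's inequality --- this is what generates the small factor $\mu/\rho$ in the second term, and it is exactly the paper's display \eqref{1.2proof2}, used there in the proof of Theorem \ref{the1}(2). With that extra ingredient the iteration closes; without it, the gradient half of the lemma does not follow from the velocity half.
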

\begin{theorem}\label{Lin}\cite{[LS],[Lin]}
Let $(u,\Pi)$ be a suitable weak solution to (\ref{NS}) in $Q(1) $. There exists
$\varepsilon_0>0$ such that if
\be
\iint_{Q(1)}|u|^3+|\Pi|^{3/2}dxdt\leq \varepsilon_0,
\ee
then $u$ is regular in $Q(1/2)$.
\end{theorem}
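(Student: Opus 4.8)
The plan is to establish this $\varepsilon$-regularity criterion by the compactness (blow-up) method of Lin, reducing everything to a single decay estimate for the scale-invariant quantity $A(u,\Pi;r):=E_3(u,r)+P_{3/2}(\Pi,r)$. Note that $E_3$ and $P_{3/2}$ are precisely the two critically-scaled quantities, so the parabolic scaling of \eqref{NS} (if $(u,\Pi)$ is a suitable weak solution in $Q(r)$, then $u^{r}(y,s)=r\,u(ry,r^{2}s),\ \Pi^{r}(y,s)=r^{2}\,\Pi(ry,r^{2}s)$ is one in $Q(1)$) leaves $A$ invariant. It therefore suffices to produce \emph{absolute} constants $\theta\in(0,1/2)$ and $\varepsilon_{0}>0$ such that $A(u,\Pi;1)\le\varepsilon_{0}$ forces $A(u,\Pi;\theta)\le\tfrac12 A(u,\Pi;1)$; rescaling then propagates this to every dyadic scale, and a Morrey-type embedding converts the resulting decay into local boundedness.

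To prove the one-step decay I would argue by contradiction. If it fails, there is a sequence of suitable weak solutions $(u_{k},\Pi_{k})$ with $\varepsilon_{k}:=A(u_{k},\Pi_{k};1)\to0$ but $A(u_{k},\Pi_{k};\theta)>\tfrac12\varepsilon_{k}$. Setting $v_{k}:=\varepsilon_{k}^{-1/3}u_{k}$ and $q_{k}:=\varepsilon_{k}^{-2/3}\Pi_{k}$ gives $A(v_{k},q_{k};1)=1$ and $A(v_{k},q_{k};\theta)>\tfrac12$, while the momentum equation becomes $\partial_{t}v_{k}-\Delta v_{k}+\varepsilon_{k}^{1/3}\,v_{k}\cdot\nabla v_{k}+\varepsilon_{k}^{1/3}\nabla q_{k}=0$ with $\mathrm{div}\,v_{k}=0$. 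The essential use of \emph{suitability} enters now: testing the local energy inequality \eqref{loc} for $(u_{k},\Pi_{k})$ with a cutoff supported in $Q(1)$ bounds $E(u_{k},1/2)+E_{\ast}(u_{k},1/2)$ by $C(E_{3}(u_{k},1)^{2/3}+E_{3}(u_{k},1)+P_{3/2}(\Pi_{k},1)^{2/3}E_{3}(u_{k},1)^{1/3})$, and after dividing by $\varepsilon_{k}^{2/3}$ this yields a uniform bound $E(v_{k},1/2)+E_{\ast}(v_{k},1/2)\le C$. Thus $v_{k}$ is bounded in $L^{\infty}_{t}L^{2}_{x}\cap L^{2}_{t}W^{1,2}_{x}$ on $Q(1/2)$, the equation bounds $\partial_{t}v_{k}$ in a negative-order space, and by Aubin--Lions $v_{k}\to v$ strongly in $L^{3}(Q(1/2))$. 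Since both quadratic terms carry the vanishing factor $\varepsilon_{k}^{1/3}$, the limit $v$ is divergence-free and solves the heat equation $\partial_{t}v-\Delta v=0$ on $Q(1/2)$; interior parabolic estimates give $\|v\|_{L^{\infty}(Q(1/4))}\le C_{0}$ and hence $E_{3}(v,\theta)\le C\theta^{3}$ with $C$ absolute.

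The crux is the pressure, which is nonlocal and at first sight decouples from the velocity in the limit; I expect this to be the main obstacle, since weak convergence of $q_{k}$ alone gives only the wrong (lower-semicontinuity) inequality. I would control it through the Poisson decomposition on $B(1/2)$: from $\Delta q_{k}=-\partial_{i}\partial_{j}(v_{k}^{i}v_{k}^{j})$ write $q_{k}=q_{k}^{(1)}+q_{k}^{(2)}$, with $q_{k}^{(1)}$ the Newtonian potential of the (restricted) source and $q_{k}^{(2)}$ harmonic in $B(1/2)$. By the Calder\'on--Zygmund theorem $q_{k}^{(1)}$ is bounded in $L^{3/2}$ by $\|v_{k}\|_{L^{3}}^{2}$ and, since $v_{k}\to v$ strongly in $L^{3}$, converges strongly in $L^{3/2}$; the harmonic parts $q_{k}^{(2)}$, bounded in $L^{3/2}$, converge strongly on interior balls by mean-value interior estimates. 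Hence $q_{k}\to q$ strongly in $L^{3/2}(Q(\theta))$, with $\Delta q=-\partial_{i}\partial_{j}(v^{i}v^{j})$ and $v$ smooth, so the interior elliptic bound $\sup_{x}|q(\cdot,t)|\le C(1+\|q(\cdot,t)\|_{L^{3/2}_{x}(B(1/2))})$ together with $\|q\|_{L^{3/2}(Q(1/2))}\le 1$ yields $P_{3/2}(q,\theta)\le C\theta$ with $C$ absolute. Passing to the limit using strong convergence of both $v_{k}$ and $q_{k}$, we obtain $A(v,q;\theta)\le C\theta^{3}+C\theta$, which is $<\tfrac12$ once $\theta$ is fixed small; this contradicts $A(v_{k},q_{k};\theta)>\tfrac12$ and fixes the absolute constants $\theta,\varepsilon_{0}$.

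Finally, the decay estimate self-improves: whenever $A(1)\le\varepsilon_{0}$ one has $A(\theta)\le\tfrac12 A(1)\le\varepsilon_{0}$, so after rescaling the estimate reapplies and $A(\theta^{m})\le 2^{-m}A(1)\to0$. Because the hypothesis only needs to hold on slightly smaller parabolic cylinders (absorbing harmless geometric factors into $\varepsilon_{0}$), the same argument runs with the cylinder centered at any $z_{0}\in Q(1/2)$, so $E_{3}(u,r)$ centered at $z_{0}$ tends to $0$ at a uniform geometric rate throughout $Q(1/2)$. A parabolic Morrey/Campanato embedding then upgrades this uniform decay to local H\"older continuity, and in particular boundedness, of $u$ on $Q(1/2)$; equivalently the decay forces the Caffarelli--Kohn--Nirenberg quantity in \eqref{ckn} to be small, which is exactly the regularity conclusion.
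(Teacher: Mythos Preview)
The paper does not prove this statement; it is quoted as a known result from \cite{[LS],[Lin]} and used as a black box (see the label \texttt{\textbackslash cite\{[LS],[Lin]\}} attached to the theorem). So there is no ``paper's own proof'' to compare against.

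Your outline is essentially Lin's original compactness argument and is correct in its main lines. Two small points worth tightening if you write it out in full: (i) in the pressure step it is cleaner to work with $q_{k}-[q_{k}]_{B(\theta)}$ (only $\nabla q$ enters the equation, so one may normalize the spatial mean to zero on each time slice); then the harmonic part decays via the interior gradient estimate $\|q^{(2)}-[q^{(2)}]_{\theta}\|_{L^{3/2}(B(\theta))}\le C\theta\,\|q^{(2)}\|_{L^{3/2}(B(1/2))}$, which makes $P_{3/2}(q,\theta)\le C\theta$ immediate without the slightly awkward $\sup_{x}|q|$ bound; (ii) the closing step from geometric decay of $E_{3}+P_{3/2}$ to boundedness is most safely done by feeding the smallness back into the local energy inequality to control $E+E_{\ast}$ at all small scales (hence verifying \eqref{ckn}), rather than appealing to Campanato directly on $u$, since $u$ is not a priori known to satisfy an equation with bounded coefficients.
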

Secondly, we
present a decay type estimate of  the pressure. For the other versions
of decay estimate for the pressure, we refer the reader to \cite{[GKT],[LS],[Lin],[Seregin1],[TX],[WW]}.
\begin{lemma}\label{presure}
For $0<\mu\leq\f{1}{2}\rho$ and $1<q<\infty$, there is an absolute constant $C$ independent of $\mu$ and $\rho$ such that
\begin{align}
P_{q}(\Pi,\mu)\leq & C\left(\f{\rho}{\mu}\right)^{5-2q}E_{2q}(u, \rho)+
C\left(\f{\mu}{\rho}\right)^{2q-2}P_{q}(\Pi,\rho).\label{lqlp2p}
\end{align}
\end{lemma}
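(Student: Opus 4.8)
The plan is to exploit the fact that, on any ball $B(\rho)$, the pressure $\Pi$ solves an elliptic equation $-\Delta\Pi = \partial_i\partial_j(u_iu_j)$ in the interior, and to split $\Pi$ into a part driven by the local nonlinearity and a harmonic part. First I would fix a cutoff $\eta\in C_0^\infty(B(\rho))$ with $\eta\equiv 1$ on $B(\rho/2)$ (this is why one needs $\mu\le\rho/2$), and write $\Pi = \Pi_1 + \Pi_2$ on $B(\rho/2)$, where
\[
\Pi_1(x,t) = -\int_{\R^3} \partial_i\partial_j N(x-y)\,\big(\eta^2\, u_iu_j\big)(y,t)\,dy,
\]
with $N$ the Newtonian kernel, and $\Pi_2 = \Pi - \Pi_1$. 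By the Calder\'on--Zygmund theorem applied to the second-order Riesz transforms, for each fixed $t$ one has $\|\Pi_1(\cdot,t)\|_{L^q(\R^3)} \le C\|u(\cdot,t)\|_{L^{2q}(B(\rho))}^2$; raising to the $q$th power, integrating in $t$ over $(-\rho^2,0)$ and rescaling gives exactly the first term $C(\rho/\mu)^{5-2q}E_{2q}(u,\rho)$ after dividing by $\mu^{5-2q}$.

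Next, $\Pi_2$ is harmonic in $B(\rho/2)$: indeed $-\Delta\Pi = \partial_i\partial_j(u_iu_j)$ and $-\Delta\Pi_1 = \partial_i\partial_j(\eta^2 u_iu_j)$ agree on $B(\rho/2)$ where $\eta\equiv 1$. For a harmonic function the interior sub-mean-value property for $|\nabla^k\Pi_2|$ yields the standard decay estimate
\[
\Big(\f{1}{\mu^{3}}\int_{B(\mu)}|\Pi_2 - \bar\Pi_2|^q\,dx\Big)^{1/q}
\le C\Big(\f{\mu}{\rho}\Big)\Big(\f{1}{\rho^{3}}\int_{B(\rho/2)}|\Pi_2-\bar\Pi_2|^q\,dx\Big)^{1/q},
\]
where $\bar\Pi_2$ is the average over $B(\rho/2)$; and since adding a constant (function of $t$ only) to the pressure does not change anything in the Navier--Stokes system, one may absorb $\bar\Pi_2$ into a redefinition of $\Pi$ and control the right-hand side by $\|\Pi_2\|_{L^q(B(\rho/2))}\le \|\Pi\|_{L^q(B(\rho))}+\|\Pi_1\|_{L^q}$. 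Combining the $\Pi_1$ bound from the previous step with the $(\mu/\rho)$-gain here, taking $q$th powers, integrating in time and collecting the scaling factors produces the term $C(\mu/\rho)^{2q-2}P_q(\Pi,\rho)$ (the exponent $2q-2$ is precisely $q\cdot 1$ from the spatial decay plus the adjustment $5-2q$ between $P_q(\Pi,\mu)$'s and $P_q(\Pi,\rho)$'s normalizations divided out: $q+ (5-2q) - (5-2q)(\mu/\rho)^0$ — more cleanly, $\mu^{-(5-2q)}\cdot\mu^{q}\cdot\rho^{3}\cdot\rho^{-(3)}$ bookkeeping gives $(\mu/\rho)^{2q-2}$ after using $\mu\le\rho$).

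The only real subtlety is the harmonic-decay step: one must be careful that the mean-value/derivative estimate is applied on a slightly smaller ball (say $B(\rho/4)$ vs $B(\rho/2)$) so that the constant does not blow up, and that the constant $C$ genuinely does not depend on $\mu,\rho$ — this follows because the estimate is scale-invariant once written in the dimensionless quantities $P_q$ and $E_{2q}$. Everything else is a routine change of variables tracking the powers of $\mu$ and $\rho$. I would therefore present the argument in three short steps — (i) the decomposition and the Calder\'on--Zygmund bound for $\Pi_1$, (ii) harmonicity and interior decay for $\Pi_2$, (iii) reassembly and rescaling — with the bookkeeping of exponents done once at the end.
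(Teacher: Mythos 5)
Your argument follows the same route as the paper's: localize the pressure equation with a cutoff, split $\Pi$ on the smaller ball into a Calder\'on--Zygmund part generated by $u\otimes u$ and a harmonic remainder, bound the first by $\|u\|_{L^{2q}}^{2}$ via the Riesz transforms and the second by interior decay for harmonic functions, then integrate in time and rescale. The exponents you arrive at are the right ones.

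The one step that does not work as written is the handling of the mean: you cannot ``absorb $\bar\Pi_2$ into a redefinition of $\Pi$'', since the statement to be proved compares $P_q(\Pi,\mu)$ and $P_q(\Pi,\rho)$ for the \emph{same} $\Pi$; subtracting a time-dependent constant alters both sides of the inequality, so you would only have proved the lemma for a modified pressure. The repair is one line: put the mean back using Jensen's inequality, $|B(\mu)|\,|\bar\Pi_2|^{q}\le C(\mu/\rho)^{3}\int_{B(\rho/2)}|\Pi_2|^{q}dx$, so that
\begin{equation*}
\int_{B(\mu)}|\Pi_2|^{q}dx\le C\Big(\f{\mu}{\rho}\Big)^{q+3}\int_{B(\rho/2)}|\Pi_2-\bar\Pi_2|^{q}dx
+C\,|B(\mu)|\,|\bar\Pi_2|^{q}
\le C\Big(\f{\mu}{\rho}\Big)^{3}\int_{B(\rho/2)}|\Pi_2|^{q}dx,
\end{equation*}
and it is precisely this volume factor $(\mu/\rho)^{3}$ which, after multiplying by $\mu^{-(5-2q)}=(\rho/\mu)^{5-2q}\rho^{-(5-2q)}$, yields the stated exponent $2q-2$. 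This is in effect what the paper does: it never subtracts the mean, applying the sub-mean-value property directly to $|P_2+P_3|^{q}$ to obtain the $(\mu/\rho)^{3}$ factor. With this correction (and with the garbled exponent computation at the end of your second paragraph replaced by the clean identity above), your proof closes and coincides with the paper's.
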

\begin{proof}
We consider the usual cut-off function $\phi\in C^{\infty}_{0}(B(\rho))$ such that $\phi\equiv1$ on $B(\f{3}{4}\rho)$ with $0\leq\phi\leq1$ and
$|\nabla\phi |\leq C\rho^{-1},~|\nabla^{2}\phi |\leq
C\rho^{-2}.$

Due to the pressure equation $\partial_{i}\partial_{i}\Pi=\partial_{i}\partial_{j}(u_{j}  u_{i} )$, we may write
$$
\partial_{i}\partial_{i}(\Pi\phi)=-\phi \partial_{i}\partial_{j}( u_{j}  u_{i} )
+2\partial_{i}\phi\partial_{i}\Pi+\Pi\partial_{i}\partial_{i}\phi
,$$
from which it follows that for $x\in B(\f{3}{4}\rho)$,
\be\ba\label{pp}
\Pi(x)=&\Gamma \ast \{-\phi \partial_{i}\partial_{j}( u_{j}  u_{i} )
+2\partial_{i}\phi\partial_{i}\Pi+\Pi\partial_{i}\partial_{i}\phi
\}\\
=&-\partial_{i}\partial_{j}\Gamma \ast (\phi u_{j}  u_{i})\\
&+2\partial_{i}\Gamma \ast(\partial_{j}\phi u_{j}  u_{i})-\Gamma \ast
(\partial_{i}\partial_{j}\phi u_{j}  u_{i})\\
& -2\partial_{i}\Gamma \ast(\partial_{i}\phi \Pi) -\Gamma \ast(\partial_{i}\partial_{i}\phi \Pi)\\
=: &P_{1}(x)+P_{2}(x)+P_{3}(x),
\ea\ee
where $\Gamma$ stands for the normalized fundamental solution of Laplace's
equation in $\mathbb{R}^{3}$.

Since $\phi(x)=1 $  when $x\in B(\mu)$, we deduce that
\[
\Delta(P_{2}(x)+P_{3}(x))=0, x\in B(\mu).
\]
By the mean value property of harmonic functions, we have
\be\label{lem2.4.1}\ba
\int_{B(\mu)}|P_{2}(x)+P_{3}(x)|^{q}dx&\leq C\Big(\f{\mu}{\rho}\Big)^{3}\int_{B(\rho/2)}|P_{2}(x)+P_{3}(x)|^{q}dx\\
&\leq C\Big(\f{\mu}{\rho}\Big)^{3}\int_{B(\rho/2)}|\Pi(x)-P_{1}(x)|^{q}dx\\
&\leq C\Big(\f{\mu}{\rho}\Big)^{3}\int_{B(\rho/2)}|\Pi(x)|^{q}dx+
C\int_{B(\rho/2)}|P_{1}(x)|^{q}dx,\\
\ea\ee
where $C$ is independent of $\mu$ and $\rho.$

According to the classical Calder\'on-Zygmund Theorem, it is easy to get
\be\label{lem2.4.3}\ba
\int_{B(\mu)}|P_{1}(x)|^{q}dx&\leq \int_{B(\rho/2)}|P_{1}(x)|^{q}dx \leq C \int_{B(\rho)}|u |^{2q}dx. \\
 \ea\ee
Collecting   \eqref{pp}-\eqref{lem2.4.3} yields
\be\label{lem2.3}
\ba
\int_{B(\mu)}|\Pi|^{q}dx\leq& C\int_{B(\mu)}
|P_{1}|^{q}+(|P_{2}+P_{3}|)^{q}dx \\
\leq& C
\Big(\int_{B(\rho)}|u|^{2q}dx\Big)
  +C\left(\f{\mu}{\rho}\right)^{3}\int_{B(\rho)}|\Pi|^{q}dx.
\ea
\ee
Integrating this inequality in time gives
$$
\iint_{Q(\mu)}|\Pi|^{q}dxds\leq C
\Big(\iint_{Q(\rho)}|u|^{2q}dxds\Big)
  +C\left(\f{\mu}{\rho}\right)^{3}\iint_{Q(\rho)}|\Pi|^{q}dxds,
$$
which in turn implies the desired estimate \eqref{lqlp2p}.
\end{proof}

The following lemma can be regarded as the localized version of the Biot-Savart law.
\begin{lemma}\label{lemma2.4}
Let $(u,\Pi)$ be a suitable weak solution to (\ref{NS}) in $Q(1)$. If $\omega\in L^{p}L^{q}(Q(1))$ and
\be\label{11} \ba
\limsup_{r\to 0}\,\, r^{2-  \frac 3p -\frac 2q  }
\Big(\int^{0}_{-r^{2}}\Big(\int_{B(r)}|\omega|^{p}
dx\Big)^{\f{q}{p}}ds\Big)^{\f{1}{q}}
 \leq M,
\ea\ee
with $2\leq3/p+3/q<3$, $1<p,\,q$,
 then there   exist $r^{\ast}\leq1/2$ and $p',\,q'$  satisfying
$2\leq 3/p' +2/q' <3$ such that
\be\label{111}
E_{\ast,p',q'}(u,r)\leq C(M),
\ee for $0<r\leq r^{\ast}$.
\end{lemma}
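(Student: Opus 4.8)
The plan is to reduce control of $\nabla u$ to control of $\omega$ through a localized Biot--Savart representation, then dispose of the resulting lower-order term in $u$ by interpolation with the natural energy quantities, and finally run the iteration of the local energy inequality \eqref{loc} together with the pressure decay of Lemma~\ref{presure} to bound those energy quantities on small scales. At the outset I would use the hypothesis to fix $\rho_{0}\in(0,1/2]$ with $W_{p,q}(\omega,\rho)\le 2M$ for all $\rho\le\rho_{0}$, and select auxiliary exponents $p'\le\min(p,6)$, $q'\le q$ with $q'<\infty$, still satisfying $2\le 3/p'+2/q'<3$. Such a choice exists precisely because $3/p+2/q<3$ (note $p=\infty$ is already excluded by $q>1$), it guarantees $W_{p',q'}(\omega,\rho)\le C\,W_{p,q}(\omega,\rho)$ by H\"older, and it keeps $u$ and $\omega$ in the Lebesgue spaces used below.

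\emph{Localized Biot--Savart and interpolation.} Taking $\phi\in C_{0}^{\infty}(B(\rho))$ with $\phi\equiv1$ on $B(\rho/2)$ and $|\nabla^{k}\phi|\le C\rho^{-k}$, I would use the identity $-\Delta(\phi u)=\nabla\times\nabla\times(\phi u)-\nabla(\Div(\phi u))$ together with $\Div u=0$, which gives $\nabla\times(\phi u)=\phi\omega+\nabla\phi\times u$ and $\Div(\phi u)=\nabla\phi\cdot u$; inverting $-\Delta$ by the Newtonian potential $N$ (legitimate since $\phi u$ is compactly supported) yields on $B(\rho/2)$
\[
\nabla u=\nabla\big(\nabla\times N\ast(\phi\omega+\nabla\phi\times u)\big)-\nabla^{2}N\ast(\nabla\phi\cdot u),
\]
where both operators are zeroth-order Calder\'on--Zygmund operators. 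Hence $\|\nabla u\|_{L^{p'}(B(\rho/2))}\le C\big(\|\omega\|_{L^{p'}(B(\rho))}+\rho^{-1}\|u\|_{L^{p'}(B(\rho))}\big)$, and taking the $L^{q'}$ norm in $t$ and multiplying by $(\rho/2)^{2-3/p'-2/q'}$ gives, for $\rho\le\rho_{0}$,
\[
E_{\ast;p',q'}(u,\rho/2)\le C\big(W_{p',q'}(\omega,\rho)+E_{p',q'}(u,\rho)\big)\le C\big(M+E_{p',q'}(u,\rho)\big).
\]
Since $3/p'+2/q'\ge2>3/2$, I would then bound $E_{p',q'}(u,\rho)$ by interpolation with the energy estimates $u\in L^{\infty}_{t}L^{2}_{x}\cap L^{2}_{t}W^{1,2}_{x}$ (H\"older down to $L^{2}$ when $p'<2$, H\"older in time to the parabolic line plus the Gagliardo--Nirenberg inequality in space with the $B(\rho)$-boundary correction when $2\le p'\le6$), obtaining $E_{p',q'}(u,\rho)\le C\big(E(u,\rho)^{\alpha}E_{\ast}(u,\rho)^{\beta}+E(u,\rho)^{1/2}\big)$ for suitable $\alpha,\beta\ge0$. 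This leaves $E_{\ast;p',q'}(u,\rho/2)\le C\big(M+E(u,\rho)^{\alpha}E_{\ast}(u,\rho)^{\beta}+E(u,\rho)^{1/2}\big)$.

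\emph{Iterating the local energy inequality.} Writing $\Phi(\rho)=E(u,\rho)+E_{\ast}(u,\rho)+P_{3/2}(\Pi,\rho)$ and $\bar\Phi(\rho)=\sup_{0<r\le\rho}\Phi(r)$, I would use interpolation once more (again using $3/p'+2/q'<3$ to stay in the admissible range) to get $E_{3}(u,\rho)\le C\,E_{\ast;p',q'}(u,\rho)^{\sigma}E(u,\rho)^{1-\sigma}$ for some $\sigma\in(0,1)$; combined with the previous bound this controls $E_{3}(u,\rho)$ by $C(M)$ plus a \emph{sublinear} power of $\bar\Phi$ at comparable scales. Inserting this, together with the pressure decay $P_{3/2}(\Pi,\mu)\le C(\rho/\mu)^{2}E_{3}(u,\rho)+C(\mu/\rho)P_{3/2}(\Pi,\rho)$ from Lemma~\ref{presure}, into \eqref{loc} with a test function concentrated at scale $\mu=\theta\rho$, I expect to reach a recursion
\[
\bar\Phi(\theta\rho)\le C\theta\,\bar\Phi(\rho)+\big(\text{sublinear in }\bar\Phi\big)+C(M);
\]
fixing $\theta$ small, absorbing the sublinear term by Young's inequality, and iterating (the scheme behind Lemma~\ref{lem1}) then gives $\bar\Phi(r)\le C(M)$ for all $r\le r^{\ast}$ with some $r^{\ast}\le\rho_{0}$. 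Feeding this back into the estimate of the previous paragraph yields $E_{\ast;p',q'}(u,r)\le C(M)$ for $r\le r^{\ast}$ (after a harmless shrinking of $r^{\ast}$), which is the assertion.

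\emph{Main obstacle.} The crux is the iteration of the third paragraph: the remainder $\rho^{-1}\|u\|_{L^{p'}}$ produced by the localized representation is scale-\emph{super}critical throughout the range $3/p+2/q\ge2$ — it is comparable to $E(u,\cdot)^{1/2}$, which is merely scale-invariant and carries no smallness — so it cannot simply be absorbed, and one is forced into the coupled local-energy iteration, in which the $\omega$-hypothesis enters only indirectly, as the constant forcing of size $\sim M$. A secondary, purely technical, point is the bookkeeping of the auxiliary exponents $(p',q')$ so that Calder\'on--Zygmund, the Sobolev/Gagliardo--Nirenberg step, and the interpolation for $E_{3}$ all remain valid; this is where the strict inequality $3/p+2/q<3$ is repeatedly used.
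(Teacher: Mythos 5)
Your first step is where the argument diverges from what is needed, and the divergence is fatal to the rest of the proof. In the localized Biot--Savart representation the terms carrying $\nabla\phi$ are supported in the annulus where $\phi$ is not constant, so the corresponding Newtonian potentials are \emph{harmonic} in the inner ball; the point of the paper's argument (following Lemma 3.6 of \cite{[GKT]} or \cite{[TX]}, in complete analogy with Lemma \ref{presure}) is to exploit this via the mean value property and obtain
\[
E_{\ast;p',q'}(u,\mu)\;\le\; C\Big(\frac{\rho}{\mu}\Big)W_{p',q'}(\omega,\rho)+C\Big(\frac{\mu}{\rho}\Big)^{\frac{3}{p'}-1}E_{\ast;p',q'}(u,\rho),\qquad 0<\mu\le\tfrac{1}{2}\rho,
\]
where $p'$ is chosen with $p'<3$ so that the exponent $3/p'-1$ is positive. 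The remainder is then the \emph{same} scale-invariant quantity at the larger scale with a small prefactor, and the lemma follows by direct iteration of this one inequality (the qualitative finiteness of $E_{\ast;p',q'}(u,1/2)$ needed to start the iteration comes from the interior elliptic estimate for $\Delta u=-\mathrm{curl}\,\omega$); no recourse to the local energy inequality, the pressure, or Lemma \ref{lem1} is needed. By bounding the remainder crudely by $\rho^{-1}\|u\|_{L^{p'}(B(\rho))}$ you discard exactly this decay and are forced into the coupled iteration of your third paragraph.

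That coupled iteration does not close. By homogeneity, any scale-invariant interpolation bound of $E_{3}$ (cubic in $u$) purely in terms of $\bar\Phi$ (quadratic in $u$) must have total degree $3/2$: writing $E_{3}\le C\,E_{\ast;p',q'}^{3\sigma}E^{3(1-\sigma)/2}$ and inserting your bound $E_{\ast;p',q'}\le C(M+\bar\Phi^{1/2})$ produces the term $\bar\Phi^{3\sigma/2+3(1-\sigma)/2}=\bar\Phi^{3/2}$, which is \emph{super}linear, not sublinear as you assert, and cannot be absorbed by Young's inequality into a recursion of the form $\bar\Phi(\theta\rho)\le C\theta^{a}\bar\Phi(\rho)+\cdots$. (The sublinearity underlying Lemma \ref{lem1} is available only because there the scaled gradient or velocity norm is assumed bounded by $M$ at \emph{all} scales, so the excess degree is parked on $M$; here that bound is precisely what you are trying to establish, so the scheme is circular.) Nor can you invoke Lemma \ref{lem1} directly, since neither of its hypotheses is granted by the vorticity assumption alone. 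The repair lies inside your own setup: retain the harmonicity of the cut-off remainder and derive the decay inequality displayed above.
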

\begin{proof}
By H\"older's  inequality, for any $2\leq\f{3}{p}+\f{2}{q}<3$,   we can choose $p'\leq p$, $ q'\leq q$ and $p'<3$ such that
$2\leq\f{3}{p'}+\f{2}{q'}<3$ and
$$W_{p',q'}(\omega,r )\leq CW_{p,q}(\omega,r ).$$
Using usual cut-off function in equations
  $\Delta u=-\text{curl} \,\omega $ together with the fact that $\text{div}\,u=0$,
  and the Calder\'on-Zygmund Theorem, we can deduce that
  \be\label{2.41}
  E_{\ast,p',q'}(u,\mu)\leq C\Big(\f{\rho}{\mu}\Big)W_{p',q'}(\omega,\rho) +C\Big(\f{\mu}{\rho}\Big)^{\f{3}{p'}-1}E_{\ast,p',q'}(u,\rho),
  \ee
where $0<\mu<\f{1}{2}\rho.$
This process is similar to the derivation of \eqref{lqlp2p} in
 Lemma \ref{presure}. For details, see Lemma 3.6 in \cite{[GKT]} or Lemma 3.6 in \cite{[TX]}.

Applying the classical elliptic  estimate to $\Delta u=-\text{curl}\,\omega $, we infer that $\nabla u\in L^{p'}L^{q'}(Q(1/2))$ and
$$
\|\nabla u\|_{L^{p'}L^{q'}(Q(1/2))}\leq C\|\omega\|_{L^{p'}L^{q'}(Q(1))}+C\|u\|_{L^{2}L^{\infty}(Q(1))}.
$$
With the help of this fact  and   the hypothesis \eqref{11}, the desired estimate \eqref{111}
can be derived  by iterating    \eqref{2.41}. Similar iteration technique will be  used in the proof of Theorem 1.3 in Section 4,
see also \cite{[CKN],[TX],[WW]}.
\end{proof}

The last lemma is concerned with  a  inequality which is a local version of
Sobolev and Ladyzhenskaya's inequalities \eqref{ls} and will play an important role in the proof of Theorem \ref{the3}.
\begin{lemma}\label{keylemma}
For $0<\sqrt{3}\mu\leq\rho$, there exists an absolute constant C independent of $\mu$ and $\rho$ such that
$$\ba
E_{10/3}(u,\mu)\leq& C\Big(\frac{\rho}{\mu}\Big)^{5/3} E^{2/3}(u,\rho)E^{1/3}_{\ast,3}(u,\rho)E^{2/3}_{\ast,h}(u,\rho)\\
&+C\Big(\frac{\rho}{\mu}\Big)^{5/3}
E(u,\rho)E^{2/3}_{\ast,h}(u,\rho)+C
\Big(\frac{\mu}{\rho}\Big)^{1/3}E_{10/3}(u,\rho).\ea$$
\end{lemma}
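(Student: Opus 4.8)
The plan is to localize the Ladyzhenskaya–Sobolev inequality \eqref{ls} by inserting a cut-off and controlling the error terms, then rescale to obtain the stated scaled quantities. First I would fix a cut-off function $\phi\in C_0^\infty(B(\rho))$ with $\phi\equiv1$ on $B(\mu)$, $0\le\phi\le1$, and $|\nabla_h\phi|\le C\rho^{-1}$, $|\nabla_3\phi|\le C\rho^{-1}$; the hypothesis $\sqrt3\,\mu\le\rho$ is exactly what is needed so that $B(\mu)$ sits comfortably inside the region where $\phi\equiv 1$ after we split coordinates (it guarantees the cylinder/ball geometry is compatible when we apply a directional Sobolev inequality in each of the three axes). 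Apply \eqref{ls} with $p=10/3$ to $f=u\phi$: since $\frac{6-p}{2p}=\frac25$ and $\frac{p-2}{2p}=\frac15$ when $p=10/3$, this gives
\[
\|u\phi\|_{L^{10/3}(\mathbb R^3)}\le C\|u\phi\|_{L^2}^{2/5}\|\partial_1(u\phi)\|_{L^2}^{1/5}\|\partial_2(u\phi)\|_{L^2}^{1/5}\|\partial_3(u\phi)\|_{L^2}^{1/5}.
\]

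Next I would expand each $\partial_i(u\phi)=\phi\,\partial_i u+u\,\partial_i\phi$ and use $\|\partial_i(u\phi)\|_{L^2}\le\|\phi\partial_i u\|_{L^2}+\|u\partial_i\phi\|_{L^2}$, so the two horizontal factors ($i=1,2$) are each bounded by $\|\nabla_h u\|_{L^2(B(\rho))}+C\rho^{-1}\|u\|_{L^2(B(\rho))}$, the vertical factor ($i=3$) by $\|\nabla_3 u\|_{L^2(B(\rho))}+C\rho^{-1}\|u\|_{L^2(B(\rho))}$, and the $L^2$ factor simply by $\|u\|_{L^2(B(\rho))}$. Plugging in and expanding the product of the directional factors produces a sum of monomials; the two "extreme" ones are the all-gradient term $\|u\|_{L^2}^{2/5}\|\nabla_h u\|_{L^2}^{2/5}\|\nabla_3 u\|_{L^2}^{1/5}$ and the term where the vertical gradient is replaced by $\rho^{-1}\|u\|_{L^2}$, namely $\rho^{-1/5}\|u\|_{L^2}^{3/5}\|\nabla_h u\|_{L^2}^{2/5}$. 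Every other cross term is, after Young's inequality, dominated by a combination of these two plus lower-order terms of the form $\rho^{-k}\|u\|_{L^2}^{\cdots}$, which upon raising to the power $10/3$ and integrating in time will feed into the $E_{10/3}(u,\rho)$ term on the right side; I would absorb those using Hölder in time exactly as in the proof of Lemma \ref{presure} and Lemma \ref{lemma2.4}.

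Then I raise the pointwise-in-time inequality to the power $10/3$, integrate over $t\in(-\mu^2,0)$, and bound each time integral. For the first monomial: $\sup_t\|u\|_{L^2}^{4/3}$ pulls out as $(\rho\,E(u,\rho))^{2/3}$, while $\int_{-\rho^2}^0\|\nabla_h u\|_{L^2}^{4/3}\|\nabla_3 u\|_{L^2}^{2/3}\,dt\le\big(\int\|\nabla_h u\|_{L^2}^2\big)^{2/3}\big(\int\|\nabla_3 u\|_{L^2}^2\big)^{1/3}$ by Hölder with exponents $(3/2,3)$, which is $(\rho\,E_{\ast,h})^{2/3}(\rho\,E_{\ast,3})^{1/3}$; collecting the $\rho$-powers and comparing with $\mu^{5-10/3}=\mu^{5/3}$ in the definition of $E_{10/3}(u,\mu)$ yields the factor $(\rho/\mu)^{5/3}$. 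The second monomial is handled the same way with $\int\|\nabla_h u\|_{L^2}^2$ and a bare $\rho^{-1/5}$ factor, giving the middle term $C(\rho/\mu)^{5/3}E(u,\rho)E_{\ast,h}^{2/3}(u,\rho)$. The remaining cross terms and the pure $\rho^{-k}\|u\|^{10/3}$ contribution collapse, after Young, into $C(\mu/\rho)^{1/3}E_{10/3}(u,\rho)$ (the exponent $1/3$ is forced by dimensional analysis of the scaled quantities). The main obstacle I anticipate is purely bookkeeping: tracking the exact powers of $\rho$ and $\mu$ through the $10/3$-th power and the time integration so that they assemble precisely into $(\rho/\mu)^{5/3}$ and $(\mu/\rho)^{1/3}$, and checking that \emph{no} term requires a bound on $\int\|\nabla_3 u\|_{L^2}^2$ with a power larger than $1/3$ — this is the point of the lemma, and it works because the vertical direction enters \eqref{ls} with exponent $\frac{p-2}{2p}=\frac15$ only, so after the Hölder split it never appears to a power exceeding $\frac13$ in the time integral.
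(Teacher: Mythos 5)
Your overall strategy (localize the anisotropic Ladyzhenskaya inequality, then Hölder in time with exponents $3/2$ and $3$ so that $\nabla_3 u$ only ever appears to the power $1/3$) is the right instinct, and your treatment of the main monomial is correct. But localizing by a cut-off $\phi$ does not work here, and the step where you claim the cross terms ``collapse, after Young, into $C(\mu/\rho)^{1/3}E_{10/3}(u,\rho)$'' is a genuine gap. Consider the worst cross term, where all three derivative slots land on the cut-off: it is $\|u\|_{L^2}^{2/5}\big(\rho^{-1}\|u\|_{L^2}\big)^{3/5}$, i.e.\ $\rho^{-3/5}\|u\|_{L^2(B(\rho))}$. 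Raising to the power $10/3$ and integrating over $(-\mu^2,0)$ gives either $(\mu/\rho)^{1/3}E^{5/3}(u,\rho)$ (taking the sup in time) or, via Hölder in space, $(\rho/\mu)^{5/3}E_{10/3}(u,\rho)$. Neither is admissible: $E^{5/3}(u,\rho)$ is not controlled by $E_{10/3}(u,\rho)$ (concentrate $u$ on a thin time slab) nor by the first two terms of the conclusion (take $\nabla_h u\equiv 0$, e.g.\ a shear profile, and they vanish), while $(\rho/\mu)^{5/3}$ is large rather than small, which destroys the iteration in the proof of Theorem \ref{the3} — the whole point of the third term is its small prefactor $(\mu/\rho)^{1/3}$. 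The same defect appears in every cross term in which a \emph{horizontal} derivative slot is filled by $\rho^{-1}\|u\|_{L^2}$ instead of $\|\nabla_h u\|_{L^2}$: such monomials carry less than the full factor $E_{\ast,h}^{2/3}(u,\rho)$ and cannot be absorbed. There is simply no mechanism in the cut-off argument that produces a factor of $\mu/\rho$ raised to a positive power in front of $E_{10/3}(u,\rho)$.

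The paper avoids this by localizing through subtraction of the \emph{horizontal average} $\overline{u^{h}_{\sqrt2 r}}$ rather than through a cut-off. One splits $|u|^{10/3}$ into the oscillation $u-\overline{u^{h}_{\sqrt2 r}}$ and the average. For the oscillation, an anisotropic Hölder factorization reduces matters to a one-dimensional Gagliardo--Nirenberg inequality in $x_3$ (which contributes $\|\partial_3 u\|^{2/5}$ plus a harmless $\mu^{-2/5}\|u\|$ correction that still carries the full horizontal factor) and a two-dimensional Poincar\'e--Sobolev inequality in $(x_1,x_2)$, which — precisely because the horizontal mean has been removed — yields $\|u\|^{1/5}\|\nabla_h u\|^{4/5}$ with \emph{no} lower-order horizontal correction, so every surviving term keeps the full power $E_{\ast,h}^{2/3}$. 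The average term is controlled by $C(\mu/r)^{2}\int_{B(\sqrt3 r)}|u|^{10/3}$, and it is this volume factor from restricting a horizontal average to the small set $\{|x_1|,|x_2|<\mu\}$ that produces the small coefficient $(\mu/\rho)^{1/3}$ on $E_{10/3}(u,\rho)$. (Incidentally, the hypothesis $\sqrt3\mu\le\rho$ is there so that the cube $\{|x_i|<\mu\}$ and the cylinder of horizontal radius $\sqrt2 r$ fit inside $B(\sqrt3 r)$, not for the reason you suggest.) To repair your argument you would need to replace the cut-off by this mean-subtraction device, at which point you essentially recover the paper's proof.
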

\begin{proof}Set
$$\overline{ u^{h}_{\sqrt{2}r} }
=\frac{1}{2\pi r^{2}}\iint_{x_{1}^{2}+x_{2}^{2}<2r^{2}}udx_{1}dx_{2},
 $$ where $r\geq\mu.$

It is obvious that
\begin{equation}\label{AL01}
\int_{B(\mu)}|u|^{\frac{10}{3}}dx\leq C\int_{B(\mu)}|u-\overline{ u^{h}_{\sqrt{2}r} }|^{\frac{10}{3}}dx+
C\int_{B(\mu)}|\overline{ u^{h}_{\sqrt{2}r} }|^{\frac{10}{3}}dx.
\end{equation}
On one hand, utilizing
  H\"older's inequality, we find that
\begin{equation}\label{AL02}
\ba
\int_{B(\mu)}|\overline{ u^{h}_{\sqrt{2}r} }|^{\frac{10}{3}}dx
\leq& C\int_{|x_{3}|<\mu}dx_{3}\iint_{|x_{1}|,|x_{2}|<\mu}d x_{1}d x_{2}  \Big|\frac{1}{r^{2}}
\iint_{x_{1}^{2}+x_{2}^{2}<2r^{2}}udx_{1}dx_{2}\Big|^{\frac{10}{3}}\\
\leq& C \Big(\frac{\mu}{r}\Big)^{2}
\int_{B(\sqrt{3}r)}|u|^{\frac{10}{3}}dx.
\ea
\end{equation}
On the other hand,   by H\"older's inequality, we have
\begin{equation}\label{AL03}
\ba
&\int_{B(\mu)}|u-\overline{ u^{h}_{\sqrt{2}r} }|^{\frac{10}{3}}dx\\
\leq&\iiint_{|x_{1}|,|x_{2}|,|x_{3}|<\mu}|u-\overline{ u^{h}_{\sqrt{2}r} }|^{\frac{10}{3}}dx\\
\leq& \Big\||u-\overline{ u^{h}_{\sqrt{2}r} }|^{\frac{4}{3}}\Big\|_{L^{\f{5}{2}}(|x_{1}|,|x_{2}|,|x_{3}|<\mu)}
\Big\|\|u-\overline{ u^{h}_{\sqrt{2}r} }\|_{L^{10}(|x_{3}|<\mu)}   \Big\|_{L^{2}(|x_{1}|,|x_{2}| <\mu)}\\&\times
\Big\|\|u-\overline{ u^{h}_{\sqrt{2}r} }\|_{L^{2}(|x_{3}|<\mu)}   \Big\|_{L^{10}(|x_{1}|,|x_{2}| <\mu)}\\
\leq&
\Big\|\|u-\overline{ u^{h}_{\sqrt{2}r} }\|_{L^{10}(|x_{3}|<\mu)}   \Big\|_{L^{2}(|x_{1}|,|x_{2}| <\mu)} ^{\f{5}{3}}
\Big\|\|u-\overline{ u^{h}_{\sqrt{2}r} }\|_{L^{2}(|x_{3}|<\mu)}   \Big\|_{L^{10}(|x_{1}|,|x_{2}| <\mu)}^{\f{5}{3}}.
\ea
\end{equation}
In the light of  the Gagliardo-Nirenberg inequality \cite{[Nirenberg]}
  on the bounded domain in $\mathbb{R}$,
  we know that
$$
\|f(x_{3})\|_{L^{10}(|x_{3}|<\mu)}
\leq C
\|f(x_{3})  \|_{L^{2}(|x_{3}|<\mu)}^{\frac{3}{5}}
\|\partial_{3}f(x_{3})\|_{L^{2}(|x_{3}|<\mu)}^{\frac{2}{5}}
+C\mu^{-\f{2}{5}}\|f(x_{3})\|_{L^{2}(|x_{3}|<\mu)}$$
for any $f\in W^{1,2}(-\mu,\mu)$.\\
With the help of this inequality and H\"older's inequality, we easily get
 \begin{align}
&\Big\|\|u-\overline{ u^{h}_{\sqrt{2}r} }\|_{L^{10}(|x_{3}|<\mu)}   \Big\|_{L^{2}(|x_{1}|,|x_{2}| <\mu)}\nonumber\\
\leq& C \Big\|\|u-\overline{ u^{h}_{\sqrt{2}r} }\|_{L^{2}(|x_{3}|<\mu)}^{\frac{3}{5}}
\|\partial_{3}u-\partial_{3}\overline{ u^{h}_{\sqrt{2}r} }
\|_{L^{2}(|x_{3}|<\mu)}^{\frac{2}{5}}  \Big\|_{L^{2}(|x_{1}|,|x_{2}|
<\mu)}\nonumber\\&+ C\mu^{-\f{2}{5}}  \Big\|\|u-\overline{ u^{h}_{\sqrt{2}r} }
\|_{L^{2}(|x_{3}|<\mu)}\Big\|_{L^{2}(|x_{1}|,|x_{2}| <\mu)}\nonumber\\
\leq& C  \|u-\overline{ u^{h}_{\sqrt{2}r} }\|_{L^{2}(|x_{1}|,|x_{2}|,|x_{3}|<\mu)}^{\frac{3}{5}}
\|\partial_{3}u-\partial_{3}\overline{ u^{h}_{\sqrt{2}r} }\|_{L^{2}(x_{1}|,|x_{2}|,|x_{3}|<\mu)}^{\frac{2}{5}}
\nonumber\\&+ C\mu^{-\f{2}{5}}   \|u-\overline{ u^{h}_{\sqrt{2}r} }\|_{L^{2}(|x_{1}|,|x_{2}|,| x_{3}|<\mu)} \nonumber\\
=& C  \Big\|\|u-\overline{ u^{h}_{\sqrt{2}r} }\|_{L^{2}(|x_{1}|,|x_{2}| <\mu)}\Big\|_{L^{2}(|x_{3}|<\mu)}^{\frac{3}{5}}
\|\partial_{3}u-\partial_{3}\overline{ u^{h}_{\sqrt{2}r} }\|_{L^{2}(|x_{1}|,|x_{2}| <\mu)}\Big\|^{\frac{2}{5}}_{L^{2}(|x_{3}|<\mu)}
\nonumber\\&+ C\mu^{-\f{2}{5}}   \Big\|\|u-\overline{ u^{h}_{\sqrt{2}r} }\|_{L^{2}(|x_{1}|,|x_{2}| <\mu)}\Big\|_{L^{2}(|x_{3}|<\mu)}
\nonumber\\
\leq& C  \Big\|\|u\|_{L^{2}( x_{1} ^{2}+ x_{2} ^{2} <2r^{2})}\Big\|_{L^{2}(|x_{3}|<\mu)}^{\frac{3}{5}}
\Big\|\|\partial_{3}u\|_{L^{2}( x_{1} ^{2}+ x_{2} ^{2} <2r^{2})}\Big\|^{\frac{2}{5}}_{L^{2}(|x_{3}|<\mu)}
\nonumber\\& + C\mu^{-\f{2}{5}}  \Big\|  \|u\|_{L^{2}( x_{1} ^{2}+ x_{2} ^{2} <2r^{2})}\Big\|_{L^{2}(|x_{3}|<\mu)}
\nonumber\\
\leq& C  \|u\|_{L^{2}(B(\sqrt{3}r))} ^{\frac{3}{5}}
\|\partial_{3}u\|_{L^{2}(B(\sqrt{3}r))} ^{\frac{2}{5}}
 + C\mu^{-\f{2}{5}}
  \|u\|_{L^{2}(B(\sqrt{3}r))}, \nonumber\end{align}
where we have used the fact that
 $\|u-\overline{ u^{h}_{\sqrt{2}r} }\|_{L^{2}(x_{1}^{2}+x_{2}^{2}<2r^{2})}\leq 2 \|u\|_{L^{2}( x_{1}^{2}+x_{2}^{2} <2r^{2})}$.\\
In addition, taking advantage of Minkowski's inequality  and Poinc\'are-Sobolev's inequality on the ball in $\mathbb{R}^{2}$, we find that
$$\ba
\Big\|\|u-\overline{ u^{h}_{\sqrt{2}r} }\|_{L^{2}(|x_{3}|<\mu)}   \Big\|_{L^{10}(|x_{1}|,|x_{2}| <\mu)}&\leq \Big\|\|u-\overline{ u^{h}_{\sqrt{2}r} }\|_{L^{10}( x_{1}^{2}+x_{2}^{2} <2r^{2})}\Big\|_{L^{2}(|x_{3}|<\mu)}\\
&\leq C\Big\|\|u \|^{\frac{1}{5}}_{L^{2}( x_{1}^{2}+x_{2}^{2} <2r^{2})}\|\nabla_{h}u \|^{\frac{4}{5}}_{L^{2}( x_{1}^{2}+x_{2}^{2} <2r^{2})}\Big\|_{L^{2}(|x_{3}|<\mu)}\\
&\leq  C\|u \|^{\frac{1}{5}}_{L^{2}(B(\sqrt{3}r))}\|\nabla_{h}u
 \|^{\frac{4}{5}}_{L^{2}(B(\sqrt{3}r))}.
\ea$$
Plugging  these two estimates into \eqref{AL03}, we infer that
$$\ba&\int_{B(\mu)}|u-\overline{ u^{h}_{\sqrt{2}r} }|^{\frac{10}{3}}dx\\
\leq& C \|u\|^{\f{4}{3}}_{L^{2}(B(\sqrt{3}r))}
\|\partial_{3}u\|^{\f{2}{3}}_{L^{2}(B(\sqrt{3}r))}
\|\nabla_{h}u\|^{\f{4}{3}}_{L^{2}(B(\sqrt{3}r))}
+C\mu^{-\f{2}{3}}\|u\|^{2}_{L^{2}(B(\sqrt{3}r))}\|\nabla_{h}u\|^{\f{4}{3}}_{L^{2}(B(\sqrt{3}r))},\ea$$
which together with \eqref{AL01} and \eqref{AL02} implies
$$\ba\int_{B(\mu)}|u|^{\frac{10}{3}}dx\leq& C \|u\|^{\f{4}{3}}_{L^{2}(B(\sqrt{3}r))}
\|\partial_{3}u\|^{\f{2}{3}}_{L^{2}(B(\sqrt{3}r))}
\|\nabla_{h}u\|^{\f{4}{3}}_{L^{2}(B(\sqrt{3}r))}
\\&+C\mu^{-\f{2}{3}}\|u\|^{2}_{L^{2}(B(\sqrt{3}r))}\|\nabla_{h}u\|^{\f{4}{3}}_{L^{2}(B(\sqrt{3}r))}
+C \Big(\frac{\mu}{r}\Big)^{2} \int_{B(\sqrt{3}r)}|u|^{\frac{10}{3}}dx. \ea$$
Integrating this inequality with respect to $t$
on $(-\mu^{2},0)$ and utilizing  H\"older's  inequality yields that
$$\ba
 &\iint_{Q(\mu)}|u|^{\f{10}{3}}dxds\\\leq&
C\left(\sup_{-r^{2}\leq s\leq0}\int_{B(\sqrt{3}r)}|u|^{2}dx\right)^{\f{2}{3}}
\left(\iint_{Q(\sqrt{3}r)}|\partial_{3}u|^{2}dxds\right)^{\f{1}{3}}
\left(\iint_{Q(\sqrt{3}r)}|\nabla_{h}u|^{2}dxds\right)^{\f{2}{3}}\\
&+C\left(\sup_{-r^{2}\leq s\leq0}\int_{B(\sqrt{3}r)}|u|^{2}dx\right)
\Big(\iint_{Q(\sqrt{3}r)}|\nabla_{h}u|^{2}dxds\Big)^{\f{2}{3}}
\\ &+C \Big(\frac{\mu}{r}\Big)^{2}
\iint_{Q(\sqrt{3}r)}|u|^{\f{10}{3}}dxds,
\ea$$
which in turn implies the desired estimate.
\end{proof}

\section{Proofs of Theorem \ref{the2} and \ref{the1}}

In the spirit of   \cite{[Seregin2]}, Theorem \ref{the2} and \ref{the1} turn
out to be the corollaries of the following propositions.
\begin{proposition}\label{prop2}
Assume that the pair $(u,\Pi)$ is a suitable weak solution to the Navier-Stokes equations. For any $M>0$, there exists a positive constant $\varepsilon (M)$ such that if
$$r^{2-\f{3}{p}-\f{2}{q}}\Big(\int^{0}_{-r^{2}}\Big(\int_{B(r)}|\nabla u|^{p }dx\Big)^{\f{q}{p }}ds\Big)^{\f{1}{q}}\leq M, $$
and
\be\label{propc2}r_{\ast}^{2-\f{3}{p}-\f{2}{q }}
\Big(\int^{0}_{-r_{\ast}^{2}}\Big(\int_{B(r_{\ast})}
|\omega_{h}|^{p}dx\Big)^{\f{q }{p}}ds\Big)^{\f{1}{q }}\leq \varepsilon,\ee
for some $ p,\,q  $
$\text{with}~ 2\leq 3/p+2/q<3,1<p,q\leq\infty$
and some $r_{\ast}\in \big(0, \min\{1/2,(E_{3}(u,1)+P_{3/2}(u,1))^{-2}\}\big]$,
then $(0,0)$ is regular point.
\end{proposition}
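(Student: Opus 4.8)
The plan is to use the blow-up (rescaling) argument of Seregin \cite{[Seregin2]} and Wang--Zhang \cite{[WZ]}, reducing everything to the limiting system \eqref{spe2}. First I would fix $M$ and argue by contradiction: suppose no $\varepsilon(M)$ works, so there is a sequence of suitable weak solutions $(u^{(k)},\Pi^{(k)})$, a sequence $\varepsilon_k\to 0$, and radii $r_{\ast,k}$ in the allowed range such that both scale-invariant bounds hold (with $M$ and with $\varepsilon_k$ respectively) but $(0,0)$ is a singular point for each $u^{(k)}$. By Lemma \ref{lem1} applied at scale $r_{\ast,k}$, the hypothesis $E_{\ast;p,q}(u^{(k)},r)\le M$ on $(0,r_{\ast,k}]$ together with the constraint $r_{\ast,k}\le(E_3(u^{(k)},1)+P_{3/2}(u^{(k)},1))^{-2}$ gives uniform control
$$
E(u^{(k)},r)+E_{\ast}(u^{(k)},r)+P_{3/2}(\Pi^{(k)},r)\le C(p,q,M)
$$
for all small $r$. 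Then rescale: set $v^{(k)}(y,s)=\lambda_k u^{(k)}(\lambda_k y,\lambda_k^2 s)$ and $q^{(k)}(y,s)=\lambda_k^2\Pi^{(k)}(\lambda_k y,\lambda_k^2 s)$ for a suitable sequence $\lambda_k\to 0$ (chosen along the $\liminf$ so that the $\omega_h$-quantity at scale $\lambda_k$ is comparable to $\varepsilon_k$). Scaling invariance of all the dimensionless quantities $E$, $E_\ast$, $P_{3/2}$, $W_{p,q}$ preserves the uniform bounds on every fixed $Q(R)$, while $W_{h;p,q}(\omega^{(k)}, R)\to 0$ for each fixed $R$.

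Next I would extract a limit. The uniform energy bounds give, after passing to a subsequence, $v^{(k)}\rightharpoonup v$ weakly in $L^2_{\mathrm{loc}}W^{1,2}$ and weakly-$\ast$ in $L^\infty_{\mathrm{loc}}L^2$, with strong $L^3_{\mathrm{loc}}$ convergence by the Aubin--Lions / compactness argument standard in this setting (using the equation to control $\partial_t v^{(k)}$ in a negative space, as in \cite{[Lin],[LS]}); the pressures converge weakly in $L^{3/2}_{\mathrm{loc}}$. The limit $(v,q)$ is then a suitable weak solution on, say, $Q(1)$, satisfying the local energy inequality (lower semicontinuity of the dissipation and convergence of the cubic and pressure terms under strong $L^3$ convergence). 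Crucially, the vanishing of $W_{h;p,q}(\omega^{(k)},R)$ forces $\omega_h(v)=0$ a.e., so $v$ solves exactly the system \eqref{spe2}. By the regularity result for \eqref{spe2} quoted from Chae--Kang--Lee \cite{[CKL]} (or the self-contained Appendix argument), $v$ is bounded near the origin; in particular $\iint_{Q(\rho)}|v|^3+|q|^{3/2}\,dyds$ is as small as we like for $\rho$ small.

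The contradiction is then obtained exactly as in \cite{[Seregin2],[WZ]}: pick $\rho$ small enough that $\iint_{Q(\rho)}|v|^3+|q|^{3/2}<\varepsilon_0/2$ with $\varepsilon_0$ the constant of Theorem \ref{Lin}; by the strong convergences, $\iint_{Q(\rho)}|v^{(k)}|^3+|q^{(k)}|^{3/2}<\varepsilon_0$ for $k$ large; Theorem \ref{Lin} then says $v^{(k)}$ is regular in $Q(\rho/2)$, hence $u^{(k)}$ is regular at $(0,0)$, contradicting the choice of the sequence. The main obstacle, and the place where care is needed, is the compactness step: one must upgrade weak convergence to strong $L^3_{\mathrm{loc}}$ convergence of $v^{(k)}$ uniformly enough to pass to the limit in both the nonlinear term $v^{(k)}\cdot\nabla v^{(k)}$ of the equation and the local energy inequality, which requires a careful use of the pressure estimate (Lemma \ref{presure}) to get equi-integrability of $q^{(k)}$ together with an interior estimate on $\partial_t v^{(k)}$; the identification $\omega_h(v)=0$ from the vanishing scaled norm is comparatively routine once the convergence of $\nabla v^{(k)}$ is in hand.
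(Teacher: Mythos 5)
Your overall strategy is exactly the paper's: contradiction, uniform bounds from Lemma \ref{lem1}, rescaling by $r_{\ast,k}$, Aubin--Lions compactness to get strong $L^3$ convergence of $v^{(k)}$ and weak $L^{3/2}$ convergence of the pressures, identification of the limit as a solution of the system with $\omega_h=0$, regularity of that limit system via Chae--Kang--Lee or the Appendix, and finally the $\varepsilon$-regularity criterion of Theorem \ref{Lin} to contradict singularity of $(0,0)$.

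The one step that does not go through as you wrote it is the final contradiction: you claim that from $\iint_{Q(\rho)}|v|^3+|q|^{3/2}<\varepsilon_0/2$ and ``the strong convergences'' one gets $\iint_{Q(\rho)}|v^{(k)}|^3+|q^{(k)}|^{3/2}<\varepsilon_0$ for large $k$. For the velocity this follows from strong $L^3$ convergence, but for the pressure you only have weak $L^{3/2}$ convergence, and the $L^{3/2}$ norm is merely lower semicontinuous along weakly convergent sequences --- smallness of the limit does not transfer to the approximants. You do mention Lemma \ref{presure}, but you deploy it for equi-integrability in the compactness step, which is not where it is needed. The paper's resolution is to keep the lower bound $E_3(v^{(k)},r)+P_{3/2}(q^{(k)},r)\ge\varepsilon_0$ for every $k$ and every small $r$, and then to bound $P_{3/2}(q^{(k)},r)$ from above \emph{uniformly in $k$} by applying the decay estimate of Lemma \ref{presure} with $q=3/2$ at the pair of scales $(r,\sqrt r)$: the Calder\'on--Zygmund part is controlled by $E_3(v^{(k)},\sqrt r)$, which for large $k$ is $O(r^{3/2})$ because $v$ is bounded near the origin and $v^{(k)}\to v$ strongly in $L^3$, while the harmonic part contributes $C\sqrt r\,P_{3/2}(q^{(k)},\sqrt r)\le C(M)\sqrt r$. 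This gives $Cr^3+C\sqrt r\ge\varepsilon_0$ for all small $r$, the desired contradiction. So the missing ingredient is not a new idea but the correct placement of the pressure decay lemma; as stated, your argument has a gap exactly at the pressure term.
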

\begin{proof}
Assume that the statement fails,
 then there exists  a sequence ${(u^{k},\Pi^{k})}$ of the suitable weak solutions to Navier-Stokes equations such that
\be\label{p1}
E_{\ast,p,q}(u^{k},r)\leq M,
\ee
for all $0<r\leq1$ and
\be\label{bl1}
W_{p ,q }(\omega_{h}^{k},r_{k})\leq \f{1}{k},
\ee
for $r_{k}\in \big(0, \min\{1/2,(E_{3}(u,1)+P_{3/2}(u,1))^{-2}\}\big]$ and  $(0,\,0)$ is a singular point of $u^{k}$. Therefore, in light of Theorem \ref{Lin}, there exists an absolute constant $\varepsilon_{0}>0$ such that
$$
E_{3}( u^{k},r)+P_{3/2}(\Pi^{k},r)\geq\varepsilon_{0},
$$
for any $0<r\leq r_{k}.$
With the help of Lemma \ref{lem1} and \eqref{p1}, we can deduce that
$$
E(u^{k},r)+E_{\ast}(u^{k},r)+P_{3/2}(\Pi^{k},r)\leq C(M),
$$
for all $0<r\leq r_{k}$.

Set $v^{k}(x,t)=r_{k}u^{k}(r_{k}x,r^{2}_{k}t),
~q^{k}(x,t)=r_{k}^{2}\Pi^{k}(r_{k}x,r^{2}_{k}t),\,
\widetilde{ \omega}^{k}(x,t)=r_{k}^{2}\omega^{k}(r_{k}x,r^{2}_{k}t)$, where $\omega^{k}=\text{curl} \,u^{k}$.
For any $R>0$, a straightforward computation gives
\begin{align}
&E (v^{k}, R)=E (u^{k},r_{k}R),~
&E&_{3}(v^{k}, R)=E_{3}(u^{k},r_{k}R),\nonumber\\ &P_{3/2}(q^{k},R)=P_{3/2}(\Pi^{k},r_{k}R),
~
&W& _{p,q}(\widetilde{\omega}_{h}^{k},r)=
W_{p,q}(\omega_{h}^{k},r_{k}r). \nonumber
 \end{align}
 This implies that, for all $0<r\leq1$ and $k\in \mathbb{N}$,
\begin{align}
&E(v^{k},r)+E_{3}(v^{k},r)+P_{3/2}(q^{k},r)\leq C(M),\label{irr2}\\
&W _{p,q}(\widetilde{\omega}_{h}^{k},1)\leq\f{1}{k},\label{irr3}\\
&E_{3}( v^{k},r)+P_{3/2}(q^{k},r)\geq\varepsilon_{0}.\label{irr1}
 \end{align}
It is obvious
 to see that the pair $(v^{k},~q^{k})$ solves the following system
 \be
\left\{\ba
&v^{k}_{t}+v^{k}\cdot\nabla v^{k}-\Delta v^{k}+\nabla q^{k}=0,\\
&\te{div}~ v^{k}=0,
\ea\right.
\ee
in the sense of  distribution on $Q(1)$.
For any text function $\phi\in L^{3}(-1,0; W_{0}^{2,2}(B(1))),$
simple computations give that
$$
\ba
\iint_{Q(1)} (v^{k}\otimes v^{k}):\nabla \phi dxdt&\leq \|v^{k}\|_{L^{\infty}(L^{2})}\|v^{k}\|_{L^{2}(L^{6})}
\|\nabla \phi\|_{L^{2}(L^{3})}\leq C(M) \|\phi\|_{L^{3}( W_{0}^{2,2})},\\
\iint_{Q(1)}|\nabla  v^{k}||\nabla \phi| dxdt&\leq \|\nabla  v^{k}\|_{L^{2}(L^{2})}\|\nabla  \phi\|_{L^{2}(L^{2})}\leq C(M) \|\phi\|_{L^{3}( W_{0}^{2,2})},\\
\iint_{Q(1)}q^{k}\text{div}\phi dxdt&\leq \| q^{k}\|_{L^{3/2}(L^{3/2})}\|  \phi\|_{L^{3}(L^{3})}\leq C(M) \|\phi\|_{L^{3}( W_{0}^{2,2})},
\ea
$$
which implies
 $\partial_{t}v^{k}\in L^{\f{3}{2}}(-1,0;(W_{0}^{2,2}(B(1)))^{\ast})$, where
 $(W_{0}^{2,2}(B(1)))^{\ast}$ denotes the dual space of $W_{0}^{2,2}(B(1))$. This together with the fact that $\nabla  v^{k}\in L^{2}L^{2}(Q(1))$
allows us to obtain a subsequence (still denoted by $k$) by using Aubin-Lions Lemma (\cite[Theorem 2.1, p.184]{[Teman2]}) such that
$$v^{k}\rightarrow  v~~~~\text{in}~~ L^{2}(Q(1)).$$
By means of  the interpolation inequality, it follows from $v^{k}\in L^{\infty}(-1,0;L^{2}(B(1)))\cap L^{2}(-1,0; W^{1,2} (B(1)))$ that
$v^{k}\in L^{10/3}(Q(1))$.
Thus, by a diagonalization process, we find that
\begin{align}
&v^{k}\rightarrow v \quad \text{in}\quad L^{3}(Q(1)),\label{0}\\
&\omega_{h}^{k}\rightharpoonup0 \quad \text{in}\quad L^{p}L^{q}(Q(1)),\label{propc11}\\
&q^{k}\rightharpoonup q \quad \text{in}\quad  L^{3/2}(Q(1)).\label{00}\end{align}
Furthermore,
 $ v=(v_{1},v_{2},v_{3})$ and $q $ solve
 \be\label{spcial1}
\left\{\ba
&v_{t}+v  \cdot\nabla v -\Delta v +\nabla q =0,\\~~&\te{div}~ v =0,\,
\omega_{h} =0,
\ea\right.
\ee
   in the sense of distribution.
Form   Lemma \ref{serr1} in Appendix or
   Chae, Kang and Lee's main result in \cite{[CKL]},
we know that there exist a constant $0<r'\leq1$ such that
    \be\label{m} |v(x,t)| \leq C(M),~~~~~~~ (x,t)\in Q(r').\ee
Notice that \eqref{irr1} is also true for the convergence
subsequence obtained in the \eqref{0}-\eqref{00}.
 Based on this, we can pass
to the limit in \eqref{irr1} to arrive at
\be\label{irr}E_{3}( v ,r)+\lim_{{k}\rightarrow\infty}P_{3/2}(q^{{k}},r)\geq\varepsilon_{0}.\ee
Employing Lemma \ref{presure} for $q=3/2$ and any $0<2r<\sqrt{r}\leq \min\{1/4,r' \}$, we can get
$$
P_{3/2}(q^{n_{k}},r) \leq  C\left(\f{\sqrt{r}}{r}\right)^{2}E_{3}(v^{{k}}, \sqrt{r})+
C\left(\f{r}{\sqrt{r}}\right)P_{3/2}(q^{{k}},\sqrt{r}) \leq C\sqrt{r}, $$
where we have used \eqref{irr2} and \eqref{m}.
This estimate together with \eqref{irr}  yields that
$$Cr^{3}+C\sqrt{r}\geq\varepsilon_{0},$$
for all $0<r<\min\{\f{1}{8},\, \f{1}{2}r'\}$.
 This will lead to a contradiction when $r$ is sufficiently small, which concludes
 the proof of this proposition.
\end{proof}
Neustupa and  Penel's result in \cite{[NP]} ensures the  interior
regularity of system \eqref{spe1}.
With the help of this fact together with Lemma \ref{lem1}, it is not hard to show the following proposition by modifying slightly the above proof. We omit the detail here.
 \begin{proposition}\label{prop1}
Assume that the pair $(u,\Pi)$ is a suitable weak solution to the Navier-Stokes equations. For any $M>0$, there exists a positive constant $\varepsilon (M)$ such that if
$$\sup_{0<r\leq1}r^{1-\f{3}{p}-\f{2}{q}}\Big(\int^{0}_{-r^{2}}
\Big(\int_{B(r)}|u|^{p}dx\Big)^{\f{q}{p}}ds\Big)^{\f{1}{q}}\leq M, $$
and
\be\label{propc1}r_{\ast}^{1-\f{3}{p}-\f{2}{q}}
\Big(\int^{0}_{-r_{\ast}^{2}}
\Big(\int_{B(r_{\ast})}|u_{3}|^{p}dx\Big)
^{\f{q}{p}}ds\Big)^{\f{1}{q}}\leq \varepsilon,\ee
for  $ p,\,q $   satisfying
$ 1\leq 3/p+2/q<2,1<p,\,q\leq\infty,
$
and some $r_{\ast}\in \big(0, \min\{\frac{1}{2},(E_{3}(u,1)+P_{3/2}(u,1))^{-2}\}\big]$,
then $(0,0)$ is regular point.
\end{proposition}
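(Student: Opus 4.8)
The plan is to mirror the proof of Proposition~\ref{prop2}, replacing the vorticity blow-up by a velocity blow-up and the auxiliary interior regularity result for \eqref{spe2} by the interior regularity result of Neustupa and Penel \cite{[NP]} for \eqref{spe1}. Suppose the conclusion fails. Then there is a sequence $(u^{k},\Pi^{k})$ of suitable weak solutions on $Q(1)$, a sequence $r_{k}\in\big(0,\min\{1/2,(E_{3}(u,1)+P_{3/2}(u,1))^{-2}\}\big]$, and constants $p,q$ with $1\le 3/p+2/q<2$, $1<p,q\le\infty$, such that
\be
\sup_{0<r\leq1} r^{1-\frac3p-\frac2q}\Big(\int^{0}_{-r^{2}}\Big(\int_{B(r)}|u^{k}|^{p}dx\Big)^{\frac qp}ds\Big)^{\frac1q}\leq M,\qquad
r_{k}^{1-\frac3p-\frac2q}\Big(\int^{0}_{-r_{k}^{2}}\Big(\int_{B(r_{k})}|u_{3}^{k}|^{p}dx\Big)^{\frac qp}ds\Big)^{\frac1q}\leq\frac1k,
\ee
while $(0,0)$ is a singular point of $u^{k}$. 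By Theorem~\ref{Lin} one then has $E_{3}(u^{k},r)+P_{3/2}(\Pi^{k},r)\geq\varepsilon_{0}$ for all $0<r\leq r_{k}$, and Lemma~\ref{lem1} (applied with the first alternative, using the hypothesis $\sup_{0<r\le1}E_{p,q}(u^{k},r)\le M$) gives $E(u^{k},r)+E_{\ast}(u^{k},r)+P_{3/2}(\Pi^{k},r)\leq C(M)$ for all $0<r\le r_{k}$, where the upper bound on $r_{k}$ is exactly what makes the term $r_{k}^{1/2}(E_{3}(u,1)+P_{3/2}(u,1))$ in Lemma~\ref{lem1} harmless.

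Next I would rescale: set $v^{k}(x,t)=r_{k}u^{k}(r_{k}x,r_{k}^{2}t)$, $q^{k}(x,t)=r_{k}^{2}\Pi^{k}(r_{k}x,r_{k}^{2}t)$. By the scaling invariance of all the dimensionless quantities, for all $0<r\le1$ and $k\in\mathbb{N}$ one obtains
\begin{align}
&E(v^{k},r)+E_{3}(v^{k},r)+P_{3/2}(q^{k},r)\leq C(M),\nonumber\\
&E_{p,q}(v_{3}^{k},1)\leq\frac1k,\nonumber\\
&E_{3}(v^{k},r)+P_{3/2}(q^{k},r)\geq\varepsilon_{0}.\nonumber
\end{align}
The pair $(v^{k},q^{k})$ solves Navier--Stokes on $Q(1)$ in the sense of distributions, and exactly as in the proof of Proposition~\ref{prop2} one checks $\partial_{t}v^{k}\in L^{3/2}(-1,0;(W^{2,2}_{0}(B(1)))^{\ast})$, so by Aubin--Lions together with the uniform $L^{\infty}L^{2}\cap L^{2}W^{1,2}\hookrightarrow L^{10/3}$ bound one extracts a subsequence with $v^{k}\to v$ in $L^{3}(Q(1))$, $v_{3}^{k}\rightharpoonup 0$ in $L^{p}L^{q}(Q(1))$, and $q^{k}\rightharpoonup q$ in $L^{3/2}(Q(1))$. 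Passing to the limit in the weak formulation (the convergence $v^k \to v$ in $L^3$ handles the nonlinear term) shows that $(v,q)$ is a suitable weak solution of the reduced system \eqref{spe1}, i.e.\ $v_{t}+v\cdot\nabla v-\Delta v+\nabla q=0$, $\mathrm{div}\,v=0$, $v_{3}=0$; here I need the limit to retain property (iii) of the definition, which follows because the local energy inequality is stable under this weak/strong convergence (this is standard and is used implicitly in \cite{[Seregin2],[WZ]}).

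Now invoke the Neustupa--Penel theorem \cite{[NP]}: since $v$ is a suitable weak solution of \eqref{spe1} with $v_{3}\equiv0$, $v$ is bounded near the origin, so there is $r'\in(0,1]$ with $|v(x,t)|\leq C(M)$ on $Q(r')$. Finally, combine this with the pressure decay estimate from Lemma~\ref{presure}: for $q=3/2$ and $0<2r<\sqrt r\le\min\{1/4,r'\}$,
\be
P_{3/2}(q^{k},r)\leq C\Big(\frac{\sqrt r}{r}\Big)^{2}E_{3}(v^{k},\sqrt r)+C\Big(\frac r{\sqrt r}\Big)P_{3/2}(q^{k},\sqrt r)\leq C\sqrt r,
\ee
using the uniform bound $E_{3}(v^{k},\sqrt r)\le C(M)$ (near the origin this even follows from boundedness of the limit, but the uniform-in-$k$ bound alone suffices). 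Passing $k\to\infty$ in the third displayed inequality above and using $E_{3}(v,r)\le Cr^{3}$ from \eqref{m} gives $Cr^{3}+C\sqrt r\geq\varepsilon_{0}$ for all sufficiently small $r$, a contradiction once $r$ is small enough. I expect the only genuinely delicate point to be verifying that the limit $(v,q)$ is a \emph{suitable} weak solution of \eqref{spe1} (passage to the limit in the local energy inequality, lower semicontinuity of the dissipation term) and that $v_{3}^{k}\rightharpoonup0$ forces $v_{3}\equiv0$ in the limit; everything else is a routine transcription of the Proposition~\ref{prop2} argument, which is why the paper says "we omit the detail here."
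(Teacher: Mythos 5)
Your proposal is exactly the ``slight modification'' the paper has in mind: Proposition \ref{prop1} is stated without proof, with the instruction to repeat the blow-up argument of Proposition \ref{prop2} using Neustupa--Penel's theorem for the limit system \eqref{spe1} in place of the interior regularity of \eqref{spe2}, and that is what you carry out. Your extra care on two points is warranted and correct: that $v_{3}^{k}\rightharpoonup 0$ together with $v^{k}\to v$ in $L^{3}$ forces $v_{3}\equiv 0$, and that the limit must be verified to be a \emph{suitable} weak solution (the result of \cite{[NP]} needs the local energy inequality, whereas Lemma \ref{serr1} in the Appendix only needs a distributional solution, so this step is genuinely new relative to Proposition \ref{prop2}). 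One correction to the last step: the parenthetical claim that ``the uniform-in-$k$ bound alone suffices'' in the pressure estimate is false. With only $E_{3}(v^{k},\sqrt r)\le C(M)$ the first term is $C\big(\sqrt r/r\big)^{2}E_{3}(v^{k},\sqrt r)=C(M)/r$, which blows up as $r\to 0$ and gives no contradiction. You genuinely need $E_{3}(v^{k},\sqrt r)\le C\,r^{3/2}$ for all large $k$, which follows from the bound $|v|\le C(M)$ on $Q(r')$ combined with the strong convergence $v^{k}\to v$ in $L^{3}(Q(1))$; this is precisely how \eqref{m} enters the corresponding display in the proof of Proposition \ref{prop2}. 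With that repair the argument is complete.
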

Now we are in a position to complete the proofs of Theorem \ref{the2}
 and \ref{the1}.
\begin{proof}[Proof of Theorem \ref{the2}]
It follows from \eqref{the2c} and Lemma \ref{lemma2.4} that there is a constant $r_{\ast}$ such that
$$E_{\ast,p',q'}(u,r)\leq C(M),\,\forall\, 0<r\leq r_{\ast} .$$
Set $v(x,t)=r_{\ast}u(r_{\ast}x,r_{\ast}^{2}t),
q(x,t)=r^{2}_{\ast}p(r_{\ast}x,r_{\ast}^{2}t)$ and $\widetilde{\omega}= r^{2}_{\ast}\omega(r_{\ast}x,r_{\ast}^{2}t),$ where $\omega=\text{curl}\,u$.
Therefore, for any $0<r\leq1$,
$E_{\ast,p',q'}(v,r)= E_{\ast,p',q'}(u,r r_{\ast})\leq C(M).$
 Moreover, we also have
$$\liminf\limits_{r\rightarrow0} W_{p,q }(\widetilde{\omega},r)=\liminf\limits_{r\rightarrow0} W_{p,q }(\omega,rr_{\ast})=\liminf\limits_{rr_{\ast}\rightarrow0} W_{p,q }(\omega,rr_{\ast})<\varepsilon_{11}.$$
From this inequalities
 and the definition of the limit inferior,
  we deduce that the suitable weak solution ($v,\, q$)
  satisfies the condition of Proposition \ref{prop2}.
 Hence, $(0,0)$ is a regular point of $v$, which means that $(0,0)$ is also regular point of $u$. Therefore, Theorem \ref{the2} is proved.
\end{proof}

\begin{proof}[Proof of Theorem \ref{the1}]
(1)\,
By the condition \eqref{the1c},
 we find that there exists a constant $r_{\ast} >0$ such that
$$
\sup_{0<r\leq r_{\ast}} E_{p,q}(u,r)\leq M.
$$
Set $v(x,t)=r_{\ast}u(r_{\ast}x,r_{\ast}^{2}t), \, q(x,t)=r^{2}_{\ast}p(r_{\ast}x,r_{\ast}^{2}t)$.
 It is clear that, for any $0<r\leq 1$,
$$E_{p,q}(v,r)=E_{p,q}(u,rr_{\ast})\leq M,$$ and $$\liminf\limits_{r\rightarrow0} E_{p,q}(v,r)=\liminf\limits_{r\rightarrow0} E_{p,q}(u,rr_{\ast})=\liminf\limits_{rr_{\ast}\rightarrow0} E_{p,q}(u,rr_{\ast})<\varepsilon_{21}.$$
Now we can employ  Proposition \ref{prop1} to obtain that $(0,0)$ is a regular point of $v$, which in turn implies that $(0,0)$ is also a regular point of $u$. Therefore, we finish the proof of the first part of this theorem.

\noindent(2) \,
By    H\"older's inequality, without loss of generality, we just deal with the case $3/p+2/q=3$.
Thanks to our assumption  \eqref{the1asum} and Lemma \ref{lem1}, we see that
\be\label{1.2proof1}E(u,r)+E_{\ast}(u,r)+P_{3/2}(p,r)\leq C(M),\ee
for any 0$<r\leq\min\{\frac{1}{4},(E_{3}(u,1)+P_{3/2}(u,1))^{-2}\},$ which yields that $E_{3}(u,r)\leq C(M)$.

Using the triangle inequality,  H\"older's inequality, Sobolev-Poinc\'are's
inequality and Gagliardo-Nirenberg's inequality, one can deduce that
\be\label{1.2proof2}\ba
E_{3}(u_{3},\mu)&\leq C  E_{3}(u_{3}-{\overline{u_{3}}}_{r''},\mu) +
    C\Big(\f{\mu}{r''}\Big)E_{3}(u_{3},r'')\\
&\leq C \Big(\f{r''}{\mu}\Big)^{2}E^{\f{1}{q}}(u_{3},r'')
E^{1-\f{1}{q}}_{\ast}(u_{3},r'')
E_{\ast;p,q}(u_{3},r'')+
     C\Big(\f{\mu}{r''}\Big)E_{3}(u_{3},r''),\\
\ea\ee
where $0<\mu\leq r''$ and ${\overline{u_{3}}}_{r''}=\f{1}{|B(r'')|}\int_{B(r'')}u_{3}dx$.
 For  detailed computation, \vspace{0.09cm}we refer the reader to
\cite[Lemma 3.2, 3.3 and 3.5, p.168-171]{[GKT]}.

It follows from \eqref{1.2proof1} and \eqref{1.2proof2} that
$$E_{3}(u_{3},\mu)\leq C(M)\Big(\f{r''}{\mu}\Big)^{2}\varepsilon_{22}+ C\Big(\f{\mu}{r''}\Big),$$
where $r''\leq\min\{\frac{1}{4},(E_{3}(u,1)+P_{3/2}(u,1))^{-2}\}.$
Fix $r''$, then choose $\mu'$ and $\varepsilon$ such that $E_{3}(u_{3},r)\leq \varepsilon,$ for $r\leq\mu'$.
Now we can make use of the result in the first part of Theorem \ref{the1} to finish the proof of the second part.
\end{proof}

\section{Proof of Theorem \ref{the3}}

\begin{proof}[Proof of Theorem \ref{the3} ]
The condition  \eqref{the3c} guarantees that there is a constant $r_{1}$ such that
$$
E_{\ast,h}(u,r)\leq \varepsilon_{31},~\text{for any} ~~~~~ 0<r\leq r_{1}.
$$
According to the regularization condition \eqref{tx} (or condition \eqref{tsai1}), it is enough
to show that there exists a constant $0<r_{2}\leq r_{1}$ such that
$$
E_{10/3}(u,r)\leq \varepsilon_{2},~\text{for any} ~~~~~ 0<r\leq r_{2}.
$$
By the fundamental
 property of the usual nonnegative cut-off function,
it follows from the local energy inequality \eqref{loc} that
$$
E(u,r)+E_{\ast}(u,r) \leq  C\Big[E_{2}(u,2r)+E_{3}(u,2r)
+E^{1/3}_{3}(u,2r)P^{2/3}_{3/2}(p,2r)\Big],
$$
which in turn implies that
$$\ba
&\varepsilon_{31}^{1/9}E(u,r)+\varepsilon_{31}^{1/9}E_{\ast}(u,r)\\
\leq& C \varepsilon_{31}^{1/9}\Big(E_{10/3}(u,2r)\Big)^{3/5}+
 C \varepsilon_{31}^{1/9}\Big(E_{10/3}(u,2r)\Big)^{9/10}\\
 &+ C \varepsilon_{31}^{1/9}
 \Big(E_{3/10}(u,2r)\Big)^{10/3}\Big(P_{5/3}(p,2r)\Big)^{3/5}\\
 \leq& C E_{10/3}(u,2r)+C\varepsilon_{31}^{5/18}+C\varepsilon_{31}+
 \varepsilon_{31}^{10/63}
 \Big(P_{5/3}(p,2r)\Big)^{6/7}\\
 \leq& C_{1} E_{10/3}(u,2r)+\varepsilon_{31}^{1/18 }\Big(\frac{\rho}{r}\Big)^{5/3}\varepsilon^{1/9}
P_{5/3}(p,\rho) +C_{3}\varepsilon_{31}^{1/9},
\ea$$
where we have used  H\"older's and  Young's inequalities and the fact that
$$P_{5/3}(p,2r)\leq\Big(\frac{\rho}{r}\Big)^{5/3}P_{5/3}(p,\rho)~\text{for} ~~2r\leq\rho\leq r_{1}.$$
By Lemma \ref{presure} with $q=5/3$, we get
$$
\varepsilon_{31}^{1/9}
P_{5/3}(p,2r) \leq\varepsilon_{31}^{1/9}C \Big(\frac{\rho}{r}\Big)^{5/3}E_{10/3}(\rho)
+C\Big(\frac{r}{\rho}\Big)^{4/3}
\varepsilon_{31}^{1/9}P_{5/3}(\rho).$$

Set
$$
\psi(r)=E_{10/3}(u,r)+\varepsilon_{31}^{1/9}E(u,r)+\varepsilon_{31}^{1/9}E_{\ast}(u,r)
 +\varepsilon_{31}^{1/9}
P_{5/3}(p,r).$$
 Thanks to Lemma \ref{keylemma}, we arrive at
$$
E_{10/3}(u,r)\leq C
\Big(\frac{\rho}{\mu}\Big)^{5/3} \psi(\rho)\varepsilon_{31}^{5/9}
+C
\Big(\frac{\mu}{\rho}\Big)^{1/3}E_{10/3}(u,\rho),$$
which in turn implies that
\be\label{120}\ba
 \psi(r)
\leq& C\Big(\frac{\rho}{r}\Big)^{5/3} \psi(\rho)\varepsilon_{31}^{5/9}
+C \Big(\frac{r}{\rho}\Big)^{1/3} E_{10/3}(u,\rho) +
\varepsilon_{31}^{1/18}\Big(\frac{\rho}{r}\Big)^{5/3}\varepsilon_{31}^{1/9}
P_{5/3}(p,\rho) +C_{3}\varepsilon_{31}^{1/9}\\
&+\varepsilon_{31}^{1/9}C
\Big(\frac{\rho}{r}\Big)^{2}E_{10/3}(u,\rho)+C\Big(\frac{r}{\rho}\Big)
\varepsilon_{31}^{1/9}P_{5/3}(p,\rho)\\
\leq& C \varepsilon_{31}^{1/18}\Big(\frac{\rho}{r}\Big)^{5/3}\psi(\rho)+
 C \Big(\frac{r}{\rho}\Big)^{1/3}\psi(\rho)+\varepsilon_{31}^{1/9}C\\
\leq& C_{1} \lambda^{-5/3} \varepsilon_{31}^{1/18} \psi(\rho)+
 C_{2}\lambda^{1/3} \psi(\rho)+\varepsilon_{31}^{1/9}C_{3},
 \ea\ee
where $\lambda= \mu/\rho \leq 1/4$.
Choosing $\lambda,\, \varepsilon_{31}$ such that
 $$q=2C_{2}\lambda^{1/3}<1\quad \text{and}\quad
 \varepsilon_{31}=\min\left\{\Big(\f{q\lambda^{5/3}}{2C_{1}}\Big)^{18},
 \Big(\f{(1-q)\lambda^{10/3}\varepsilon_{2}}{2C_{3}}\Big)^{9}\right\}.$$
It follows from
\eqref{120} that
$$
\psi(\lambda\rho)\leq q\psi(\rho)+\varepsilon_{31}^{1/9}C_{3}.
$$
Iterating this inequality, we see that
$$
\psi(\lambda^{k}\rho)\leq q^{k}\psi( \rho)+\f{1}{2}\lambda^{5/3}\varepsilon_{2}.
$$
By the definition of $\psi(r)$, we know that there exists a positive number $K_{0}$ such that
$$
q^{K_{0}}\psi(r_{1})\leq  4\f{C((\|u\|_{L^{\infty}L^{2}},\|u\|_{L^{2}W^{1,2}},
\|p\|_{L^{5/3}L^{5/3}}))}{r_{1}^{5/3}}q^{K_{0}}
\leq\f{1}{2}\lambda^{5/3}\varepsilon_{2}.
$$
Let $r_{2}=\lambda^{K_{0}}r_{1}$. Therefore, for all
 $0<r\leq r_{2},$ there exists a constant $k\geq K_{0}$
 such that $\lambda^{k+1}r_{1}\leq r\leq\lambda^{k}r_{1}$.
 Straightforward calculations show that
$$\ba
E_{10/3}(u,r)&=\f{1}{r^{5/3}}\iint_{Q(r)}|u|^{10/3}dxdt\\
&\leq\f{1}{\lambda^{k+1}r_{1}}\iint_{Q(\lambda^{k}r_{1})}|u|^{10/3}dxdt\\
&\leq\f{1}{\lambda^{5/3}}\psi(\lambda^{k}r_{1})\\
&\leq\f{1}{\lambda^{5/3}}(q^{k-K_{0}}q^{K_{0}}\psi(r_{1})
+\f{1}{2}\lambda^{5/3}\varepsilon_{2})\\
&\leq \varepsilon_{2},
\ea$$
which ends the proof of this theorem.
\end{proof}

\appendix
\section{Appendix}
\label{appendix}

\begin{lemma}\label{serr0}
Let $k(x,t)$ denote the fundamental solution of heat equation with $k(x,t)=0$ for $t\leq 0$,
and $f\in L^{q'}(0,T; L^{q}(\Omega))$, and set
$$
k\ast f(x,t)=\int_{0}^{T}\int_{\Omega}k(x-y,t-s)f(y,s)dyds.
$$
Then
$$
\|k\ast f\|_{L^{r'}(0,T;L^{r}(\Omega))}\leq C\| f\|_{L^{q'}(0,T;L^{q}(\Omega))},$$
with $$n\Big(\f{1}{q}-\f{1}{r}\Big)+2\Big(\f{1}{q'}-\f{1}{r'}\Big)<2.$$
\end{lemma}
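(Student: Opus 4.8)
To prove Lemma~\ref{serr0} the plan is to peel off the sharp spatial decay of the Gaussian and reduce everything to a one‑dimensional convolution inequality in the time variable; we assume, as in the usual formulation of this lemma, that $q\le r$ and $q'\le r'$. Recall that $k(x,t)=(4\pi t)^{-n/2}\exp(-|x|^{2}/4t)$ for $t>0$ and $k\equiv0$ for $t\le0$, and that a direct Gaussian computation gives
$$\|k(\cdot,t)\|_{L^{s}(\mathbb{R}^{n})}=C_{s}\,t^{-\frac n2\left(1-\frac1s\right)},\qquad t>0,\ 1\le s\le\infty .$$
Extending $f(\cdot,s)$ by zero outside $\Omega$, observe that for each fixed $t$ one has $(k\ast f)(\cdot,t)=\int_{0}^{t}k(\cdot,t-s)\ast_{x}f(\cdot,s)\,ds$, the $s$‑integration running only over $s<t$ since $k$ vanishes at nonpositive times.

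First I would apply Minkowski's integral inequality in $s$, followed by Young's convolution inequality in $\mathbb{R}^{n}$ with exponents tied by $1+\frac1r=\frac1\sigma+\frac1q$ (here $\sigma\ge1$ because $q\le r$). Setting $\alpha:=\frac n2\left(1-\frac1\sigma\right)=\frac n2\left(\frac1q-\frac1r\right)$ and $g(s):=\|f(\cdot,s)\|_{L^{q}(\mathbb{R}^{n})}\in L^{q'}(0,T)$, this produces
$$\|(k\ast f)(\cdot,t)\|_{L^{r}(\mathbb{R}^{n})}\ \le\ C\int_{0}^{t}(t-s)^{-\alpha}g(s)\,ds\ =\ C\,(h\ast g)(t),\qquad h(\tau):=\tau^{-\alpha}\mathbf{1}_{(0,T)}(\tau).$$

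Next I would estimate $\|h\ast g\|_{L^{r'}(0,T)}$ by Young's inequality on the finite interval $(0,T)$: one has $\|h\ast g\|_{L^{r'}}\le\|h\|_{L^{m}(0,T)}\|g\|_{L^{q'}(0,T)}$, where $m$ is forced by $1+\frac1{r'}=\frac1m+\frac1{q'}$ (so $m\ge1$ exactly because $q'\le r'$), and where, $(0,T)$ being bounded, $h\in L^{m}(0,T)$ if and only if $\alpha m<1$. A short computation shows that $\alpha m<1$ is equivalent to $\frac n2\left(\frac1q-\frac1r\right)<1-\left(\frac1{q'}-\frac1{r'}\right)$, i.e. to the hypothesis $n\left(\frac1q-\frac1r\right)+2\left(\frac1{q'}-\frac1{r'}\right)<2$. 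Chaining the two displayed estimates then yields $\|k\ast f\|_{L^{r'}(0,T;L^{r})}\le C\|f\|_{L^{q'}(0,T;L^{q})}$, as desired.

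I expect the only delicate point to be the borderline scaling: the temporal kernel $\tau^{-\alpha}$ just fails to lie in $L^{1/\alpha}(0,T)$, so it is precisely the \emph{strict} inequality in the hypothesis that makes the forced exponent $m$ satisfy $\alpha m<1$ and hence puts $h$ into $L^{m}(0,T)$; when $\alpha=0$ the kernel is bounded on $(0,T)$ and the argument degenerates to Young's inequality with an $L^{\infty}$ kernel. Apart from this bookkeeping, everything is the standard chain Minkowski $\to$ spatial Young $\to$ temporal Young.
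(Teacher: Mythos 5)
The paper does not actually prove this lemma; it simply states that the proof is standard and refers to Serrin. Your argument is precisely that standard proof (Minkowski in time, Young in space with $\|k(\cdot,t)\|_{L^{\sigma}}\sim t^{-\alpha}$, then Young in time for the kernel $\tau^{-\alpha}$ on the bounded interval), and the exponent bookkeeping is correct: the strict inequality $n(1/q-1/r)+2(1/q'-1/r')<2$ is exactly $\alpha m<1$, which puts $\tau^{-\alpha}$ in $L^{m}(0,T)$. The only point worth recording is the one you already flag: the two applications of Young force $q\le r$ and $q'\le r'$, which the lemma's statement leaves implicit but which hold in the bootstrapping application (where the exponents $p_{k}$ increase), and when $r<q$ on the bounded domain one can first lower $r$ by H\"older. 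So the proposal is correct and fills the gap the paper leaves by citation.
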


\begin{proof}
  Since the proof is standard, we omit it (see, for example, \cite{[Serrin]}).
\end{proof}

\begin{lemma}\label{serr1}Let the pair $(v,\pi)$ be the weak solution to the following system
$$\left\{\ba
&v_{t}-\Delta v+v\cdot\nabla v+\nabla \pi=0,\\
&\mathrm{div}\, v=0,~\omega_{h}=0,
\ea\right.$$
where $\omega=$\text{curl} $v$.
Then one has $v\in L^{\infty}_{loc}$.
\end{lemma}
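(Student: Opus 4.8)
The goal is to show that a weak solution $(v,\pi)$ of the system with $\omega_h=0$ is locally bounded. The special structure $\omega_1=\partial_2 v_3-\partial_3 v_2=0$ and $\omega_2=\partial_3 v_1-\partial_1 v_3=0$ couples the vertical derivative $\partial_3 v_h$ with the horizontal gradient $\nabla_h v_3$: namely $\partial_3 v_1=\partial_1 v_3$ and $\partial_3 v_2=\partial_2 v_3$. The plan is to bootstrap integrability of $v$ using the velocity equation rather than the vorticity equation, exploiting this identity to control the ``bad'' direction $\partial_3$ in terms of quantities that are already controlled once we know $v_3$ is regular enough.

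The steps, in order, are as follows. First I would record the structural consequence of $\omega_h=0$: all the vertical derivatives of the horizontal velocity are horizontal derivatives of $v_3$, so $\nabla v$ is, up to these algebraic identities, essentially determined by $\nabla_h v_h$ and $\nabla v_3$. Second, I would write the integral (Duhamel/heat-kernel) representation of $v$ driven by the nonlinearity $v\cdot\nabla v$ and the pressure gradient, treating the equation componentwise; the pressure is recovered from $-\Delta\pi=\partial_i\partial_j(v_iv_j)$ and estimated by the Calder\'on--Zygmund theorem, exactly as in Lemma \ref{presure}. Third, I would run Serrin's bootstrapping argument: starting from $v\in L^\infty_tL^2_x\cap L^2_tW^{1,2}_x$ (hence $v\in L^{10/3}$ locally by interpolation), I feed this into the heat-potential estimate of Lemma \ref{serr0} to gain a better exponent for $v$, using at each stage the divergence-free condition and the identity $\partial_3 v_h=\nabla_h v_3$ to keep the vertical derivatives under control. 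Iterating, the integrability exponent of $v$ increases until it exceeds the Ladyzhenskaya--Prodi--Serrin threshold (e.g. $v\in L^r_tL^s_x$ with $3/s+2/r<1$ locally), whence $v\in L^\infty_{loc}$ by the standard subcritical regularity theory; alternatively one may simply quote Chae--Kang--Lee \cite{[CKL]}, since knowing two components of the vorticity vanish is far stronger than their hypothesis that two components lie in $L^p_xL^q_t$ with $3/p+2/q\le 2$.

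The main obstacle is the first bootstrapping step, where the available a priori integrability of $\nabla v$ is only $L^2$: one must verify that the gain in the exponent from Lemma \ref{serr0} is strictly positive, i.e. that the condition $n(1/q-1/r)+2(1/q'-1/r')<2$ can genuinely be met with $r>q$, and that the nonlinearity $v\nabla v$ lies in the requisite $L^{q'}_tL^q_x$ after using the constraint. Here the algebraic identity $\partial_3 v_i=\partial_i v_3$ ($i=1,2$) is essential: it lets us replace a derivative in the singular vertical direction by a horizontal derivative of the single scalar $v_3$, so that once $v_3$ is slightly more regular the whole gradient improves, and the iteration closes. Keeping careful track of the localization (cut-off functions, commutator terms, and the harmless lower-order contributions as in \eqref{pp}--\eqref{lem2.3}) is routine but must be done; I would not belabor it here. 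Once the exponent crosses the subcritical line the conclusion $v\in L^\infty_{loc}$ is immediate.
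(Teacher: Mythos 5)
Your fallback of quoting Chae--Kang--Lee is legitimate (the paper itself offers exactly this alternative when the lemma is invoked), but your primary bootstrap argument has a genuine gap: as described, the iteration does not close. You propose to feed $v\cdot\nabla v$ into the heat-potential estimate of Lemma~\ref{serr0} starting from $v\in L^{10/3}$ and $\nabla v\in L^{2}$. But then $v\cdot\nabla v$ (or $\mathrm{div}(v\otimes v)$) lies only in $L^{q}$ with $1/q=1/p_{k}+1/2$ (resp.\ $2/p_{k}$ with a derivative on the kernel), and the parabolic potential estimate yields $1/p_{k+1}>1/p_{k}+1/10$ (resp.\ $1/p_{k+1}>2/p_{k}-1/5$, which is a gain only when $p_{k}>5$). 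Starting from $p_{1}=10/3$ this is a loss, which is precisely why Serrin's original argument needs a Ladyzhenskaya--Prodi--Serrin hypothesis to begin with. The algebraic identity $\partial_{3}v_{1}=\partial_{1}v_{3}$, $\partial_{3}v_{2}=\partial_{2}v_{3}$ does not repair this: it exchanges one $L^{2}$ derivative for another $L^{2}$ derivative, and you give no mechanism by which $v_{3}$ "becomes slightly more regular" to start the engine.

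The paper's proof closes the loop with two ideas you do not use. First, since $\mathrm{div}\,\omega=0$ and $\omega_{h}=0$ force $\partial_{3}\omega_{3}=0$, the vortex-stretching term $\omega_{3}\partial_{3}v_{3}$ in the $\omega_{3}$-equation can be integrated by parts like a transport term, giving the \emph{a priori} bound $\omega_{3}\in L^{\infty}L^{2}\cap L^{2}W^{1,2}\hookrightarrow L^{10/3}_{t,x}$. Second, the nonlinearity is rewritten in rotation form, $v\cdot\nabla v=\tfrac12\nabla|v|^{2}+\omega\times v$, with the gradient part absorbed into the pressure; since $\omega=(0,0,\omega_{3})$, the remaining forcing is $\omega_{3}(-v_{2},v_{1},0)$, a product of a factor with \emph{fixed} integrability $10/3$ and the improving factor $v\in L^{p_{k}}$. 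This puts the forcing in $L^{q}$ with $1/q=3/10+1/p_{k}$, and Lemma~\ref{serr0} then gives a strict gain $1/p_{k}-1/p_{k+1}<1/10$ at every step, so finitely many iterations reach $L^{\infty}$. Without this restructuring of the nonlinearity your bootstrap stalls at the first step.
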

\begin{proof}
The proof relies on a bootstrapping argument.
Since  $\omega_{h}=0$ and div\,$\omega=0$,  the equations of vorticity
 read as
 $$
{\omega_{3}}_{t}-\Delta \omega_{3}+v\cdot\nabla \omega_{3}=\omega_{3}\partial_{3} v_{3},~\partial_{3}\omega_{3}=0.
$$
By means of $\partial_{3}\omega_{3}=0$, we can control the stretching term
$\omega_{3}\partial_{3} v_{3}$ just as the term $v\cdot\nabla \omega_{3}$
when we apply the standard energy method to get $\omega_{3}\in L^{\infty}L^{2}\cap L^{2}(W^{1,2})$ under the condition $v\in L^{\infty}L^{2}\cap L^{2}(W^{1,2}). $

Thanks to $v\cdot\nabla v=\f{1}{2}\nabla v^{2}+\omega\times v,$
 the original system can be rewritten as
$$\left\{\ba
&v_{t}-\Delta v+\nabla \Pi=-\omega\times v=-(-v_{2}\omega_{3},v_{1}\omega_{3},0),  \\
&\text{div}\, v=0,~~\Pi=  \pi+  \f{1}{2}v^{2}.
\ea\right.$$
Notice that  if we assume that $u\in L^{p_{k}}_{t,x},\,k=1,2,\cdots$, then $\omega\times v\in L^{\f{10p_{k}}{3p_{k}+10}}.$
It follows from the classical interior estimate  for the Stokes system that
$\nabla \Pi\in L^{\f{10p_{k}}{3p_{k}+10}}.$
Therefore
$v_{t}-\Delta v=-\omega\times v-\nabla \Pi\in L^{\f{10p_{k}}{3p_{k}+10}}. $
Notice also that $v=k\ast(-\omega\times v-\nabla \Pi)+H(x,t)=:v^{1}+H(x,t)$, where
$H(x,t)$ is the solution of the heat equations. At first, it is clear that $H(x,t)\in L^{\infty}$. Secondly, by Lemma \ref{serr0}, we can get $v^{1} \in L^{p_{k+1}}$ where $p_{k+1}$ satisfies
$$
3\Big(\f{3}{10}+\f{1}{p_{k}}-\f{1}{p_{k+1}}\Big)
+2\Big(\f{3}{10}+\f{1}{p_{k}}-\f{1}{p_{k+1}}\Big)<2,
$$
namely,
$$
 \f{1}{p_{k}}-\f{1}{p_{k+1}}<\f{1}{10}.
$$
Set $
 \f{1}{p_{k}}-\f{1}{p_{k+1}}=\f{1}{20},
$ and recall that $p_{1}=10/3$.
Then, after a finite number of bootstrapping steps,
we finally obtain $v\in L^{\infty}_{t,x}$.
\end{proof}

\begin{remark}
After this paper was submitted for publication, we learnt that the regular condition \eqref{wz1} was improved by Wang, Zhang and Zhang 
 in the preprint \cite{[WZZ]}. We would like to point out that our Theorem   \ref{the1} is different from  Theorem   1.2 in \cite{[WZZ]}.  Especially, the fist result of Theorem   \ref{the1}
    does not seem to be comparable to that of \cite{[WZZ]}.
 The first author  express his thank
to Professor Zhifei Zhang for  providing the paper \cite{[WZ2]}.
\end{remark}

\noindent
{\bf Acknowledgements:}
The second author is supported in part by the National Natural Science Foundation of China under grant No.11101405 and the President Fund of UCAS.


\end{document}